\newcommand{\U}{\underline}
\newcommand{\R}{\mathbb{R}}
\newcommand{\C}{\mathbb{C}}
\newcommand{\N}{\mathbb{N}}
\newtheorem{proposition}{Proposition}[section]
\newtheorem{defin}{Definition}[section]
\newtheorem{theorem}[defin]{Theorem}
\newtheorem{exa}{Example}[section]
\newenvironment{example}{\begin{exa}\rm}{\end{exa}}
\newtheorem{exas}{Examples}
\newtheorem{lemma}[defin]{Lemma}
\newtheorem{corollary}[defin]{Corollary}
\newenvironment{proof}
{\noindent{\it Proof.}}{\hfill $\Box$\par\vspace{2.5mm}}
\newtheorem{remark}{Remark}
\numberwithin{equation}{section}
\title{Difference radical in terms of shifting zero and applications to the Stothers-Mason theorem}
\author{Katsuya Ishizaki\footnote{\ Supported by JSPS KAKENHI Grant Number 20K03658}~
 and Zhi-Tao Wen\footnote{\ Supported by the National Natural Science Foundation of China (No.~11971288 and No.~11771090) and Shantou University SRFT (NTF18029)}
 }
\date{}
\begin{document}
\maketitle

\begin{abstract}
In this paper, we study the shifting zeros with its heights and an analogue to difference radical.
We focus on the Stothers-Mason theorem by using falling factorials. As applications, we discuss the difference version of the Fermat type functional equations. Some examples are given.

\medskip
\noindent
\textbf{Keyword}: falling factorials, shifting zeros, difference radical, the Stothers--Mason theorem, Fermat type functional equations

\medskip
\noindent
\textbf{2020MSC}: 39B10; 30D35.

\end{abstract}

\section{Introduction}
Let $P$ be a polynomial. The radical ${\rm rad}(P)$ is the product of distinct linear factors of $P$.
We study a difference analogue of radical of a complex polynomial, and consider a difference analogue of the Stothers-Mason theorem. As applications, we study the difference version of the Fermat type functional equations, see e.g.,~\cite{IKLT},~\cite{li2015}.

Let $a$, $b$ and $c$ be relatively prime polynomials such that not all of them are identically zero. The Stothers--Mason theorem~\cite{mason1984},~\cite{stothers1981}, see also e.g.,~\cite{Snyder} states that if they satisfy $a+b=c$, then
\begin{equation}\label{smt}
\max\{\deg(a),\deg(b),\deg(c)\}\leq \deg({\rm rad}(abc))-1.
    \end{equation}
A difference analogue of the Stothers--Mason theorem is seen in e.g.,~\cite{IKLT}.
One of the purpose of this paper is that we give an alternative approach to difference analogues of radical of polynomials in terms of shifting zero different from \cite{IKLT}.

\vspace{0.25cm}

We recall basic notations in difference calculus. Given a function $f$, we denote by $\Delta f(z)=f(z+1)-f(z)$ the difference operator. Let $n$ be a nonnegative integer. Define $\Delta^n f(z)=\Delta(\Delta^{n-1} f(z))$ for $n\geq 1$, and write $\Delta^0 f=f$.
Define $z^{\underline{0}}=1$ and
\begin{equation}
z^{\underline{n}}=z(z-1)\cdots(z-n+1)=n!\binom{z}{n},\quad n=1, 2, 3, \dots,\label{1.3}
\end{equation}
which is called a {\it falling factorial}, see e.g.,~\cite[Pages 6--7]{Boole},~\cite[Subsection~A.2]{Ishizaki-Wen},~\cite[Page 25]{M-Thomson1933}. This yields $\Delta z^{\underline{n}}=(z+1)^{\underline{n}}-z^{\underline{n}}=nz^{\underline{n-1}}$, which corresponds to $(z^n)'=nz^{n-1}$ in the differential calculus.

Let $f$ be a complex function and let $n\in\N$. Consider products of the forms
$f^{\underline{n}}(z)=f(z)f(z-1)\cdots f(z-n+1)$, $n=1,2,3,\dots$, $f^{\underline{0}}=1$ and
$f^{\overline{n}}(z)=f(z)f(z+1)\cdots f(z+n-1)$, $n=1,2,3,\dots$, $f^{\overline{0}}=1$.
We call $f^{\underline{n}}$ a falling (descending) factorial expression, and call $f^{\overline{n}}$ a raising (ascending) factorial expression, see e.g.,~\cite[Page 25]{M-Thomson1933}.
The falling factorial $z^{\underline{n}}=z(z-1)\cdots (z-n+1)$ given in \eqref{1.3} is the falling factorial expression of $f(z)=z$.
\vspace{0.25cm}

In Section~\ref{shifting zeros}, we give the definition of shifting zero and study some properties of it with difference calculus. We discuss the difference radical by using shifting zero and focus on the difference analogue of the Stothers-Mason theorem by using falling factorials in Section~\ref{dirad}. Section~\ref{edsm} is concerned with an extension of difference analogue of the Stothers--Mason theorem. We treat in Section~\ref{psf} polynomial solutions of difference version of the Fermat type functional equations.

\section{Shifting zero}\label{shifting zeros}

Let $f$ be analytic in a domain $G\subset\C$ and let $n\in\N$.
Suppose that $z_0, z_0+1,\ldots, z_0+n\in G$.
The point $z_0$ is called a \emph{shifting zero of $f(z)$ with height $n$ in $G$} provided $f(z_0)$ and all difference $\Delta^k f(z_0)$ vanish for every $0\leq k<n$, but $\Delta^n f(z_0)\neq 0$.
In particular, if $z_0$ is a shifting zero of $f$ with height 1, then we also call $z_0$ is a shifting zero of $f$ with simple height.

\begin{example}
The function $f(z)=2^z=e^{z\ln 2}$ has no zeros, which has no shifting zeros. Let $g$ be defined as $g(z)=z^2(z-1)(z-2)2^z$, which can be written
    $$
   g(z)=z^{\underline{3}}\cdot z2^z=(z-1)^{\underline{2}}\cdot z^22^z=(z-2)^{\underline{1}}\cdot z^2(z-1)2^z.
    $$
Then $z=0$ is a shifting zero of $g$ with height $3$ in $\C$. In addition, $z=1$ and $z=2$ are the shifting zeros of $g$ with height 2 and 1 in $\C$, respectively.

Consider the Euler Gamma function $\Gamma$. It is known that $1/\Gamma$ is a transcendental entire function that has simple zeros at $0, -1, -2, \dots$. This gives that for any $n\in\N$, $z=-n$ is a shifting zero of $1/\Gamma$ with height $n+1$ in $\C$, see e.g.,~\cite[Page 236]{WW1927}.
\end{example}

\begin{example}
Consider a polynomial $f(z)=z^2(z-1)^3$. It gives us that $z=0$ is a zero of $f$ with order 2, and $z=1$ is a zero of $f$ with order 3. We could also see that $z=0$ is a shifting zero of $f$ with height 2 in $\C$, and $z=1$ is a shifting zero of height $1$ in $\C$. In fact, $f$ is of the form
    $$
    f(z)=(z^{\underline{2}})^2(z-1)=z^{\underline{2}}\cdot z(z-1)^2.
    $$
\end{example}

\begin{remark}
A function $f(z)$ may have a shifting zero with infinite height in $\C$. For example, any integer point is the zero of $f(z)=e^{2\pi iz}-1$. If a function $f$ has a shifting zero with infinite height, then the order of growth of $f$ is at least 1. It is important to study the order of growth of solutions of nonlinear difference equations. For example, let $w$ be a solution of a difference Riccati equation
    \begin{equation}\label{AR.eq}
    w(z+1/2)=\frac{A(z)}{w(z)},
    \end{equation}
where $A(z)$ is a nonconstant rational function. Suppose that all zeros and poles of $A(z)$ are lying on $|z|<R$. Let $z=z_0$ be a zero of $w(z)$ such that $|z_0|>R$, it gives $z_0+1, z_0+2,\ldots$ are zeros of $w(z)$.
Then $z_0$ is a shifting zero of $w$ with infinite height in $\C$, otherwise, $w$ has at most finitely many zeros and poles. It shows that there does not exist any transcendental meromorphic solutions of \eqref{AR.eq} of order less than 1.
\end{remark}

It is well known that $z=z_0$ is a zero of an analytic function $f$ with order $n$ if and only if $f$ is of
the form $f=(z-z_0)^ng(z)$, where $g(z)$ is analytic at $z=z_0$ and $g(z_0)\neq 0$. The difference analogue of this result is given as follows.

\begin{theorem}\label{zero.theorem}
Let $f$ be analytic in the domain $G\subset\C$ and let $n\in\N$. Suppose that $z_0, z_0+1,\ldots, z_0+n\in G$.
The point $z_0$ is a shifting zero of $f(z)$ with height $n$ in $G$ if and only if there exists an analytic function $g(z)$ in $G$ such that
    \begin{equation}\label{fzero.eq}
    f(z)=(z-z_0)^{\underline{n}}g(z)
    \end{equation}
and $g(z_0+n)\neq 0$.
\end{theorem}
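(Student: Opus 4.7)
The strategy is to first translate the $\Delta$-based definition of a shifting zero into a pointwise one, and then build the factorization via an obvious candidate for $g$. Specifically, I would aim to show that the shifting zero of height $n$ condition is equivalent to
$$f(z_0)=f(z_0+1)=\cdots=f(z_0+n-1)=0,\qquad f(z_0+n)\neq 0,$$
and once this is in hand the factorization is essentially immediate because the falling factorial $(z-z_0)^{\underline{n}}=(z-z_0)(z-z_0-1)\cdots(z-z_0-n+1)$ has \emph{simple} zeros precisely at $z_0,z_0+1,\ldots,z_0+n-1$ and equals $n!$ at $z=z_0+n$.

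The key tool for the translation is the pair of inverse Newton formulas
$$\Delta^k f(z_0)=\sum_{j=0}^{k}(-1)^{k-j}\binom{k}{j}f(z_0+j),\qquad f(z_0+k)=\sum_{j=0}^{k}\binom{k}{j}\Delta^{j}f(z_0),$$
which are standard in difference calculus and follow by induction on $k$. Plugging $k<n$ into either formula makes the equivalence ``$\Delta^{j}f(z_0)=0$ for $0\leq j<n$'' $\Leftrightarrow$ ``$f(z_0+j)=0$ for $0\leq j<n$'' a one-line check. Moreover, assuming the first $n$ values vanish, the first formula with $k=n$ collapses to $\Delta^n f(z_0)=f(z_0+n)$, so the non-degeneracy $\Delta^n f(z_0)\neq 0$ in the definition of a shifting zero is exactly $f(z_0+n)\neq 0$.

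With this pointwise reformulation the two directions are both short. For the forward direction, I would simply define
$$g(z):=\frac{f(z)}{(z-z_0)^{\underline{n}}}$$
on $G$. Each of the points $z_0,z_0+1,\ldots,z_0+n-1$ is a simple zero of the denominator, while $f$ vanishes there, so every singularity of $g$ is removable and $g$ extends analytically to all of $G$. Evaluating at $z_0+n$ gives $g(z_0+n)=f(z_0+n)/n!\neq 0$. For the converse, if $f=(z-z_0)^{\underline{n}}g$ with $g$ analytic and $g(z_0+n)\neq 0$, then $f$ clearly vanishes at $z_0,\ldots,z_0+n-1$ and satisfies $f(z_0+n)=n!\,g(z_0+n)\neq 0$, so by the equivalence above $z_0$ is a shifting zero of $f$ of height exactly $n$.

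There is no real obstacle here; the only thing to be careful about is not to assume anything about the multiplicity of $f$ at the zeros beyond what the hypothesis actually gives. Since the divisor $(z-z_0)^{\underline{n}}$ is a product of \emph{distinct} simple factors, even this minimal information (each of $z_0,\ldots,z_0+n-1$ is a zero of multiplicity at least one) is exactly enough to guarantee that $g$ is analytic on all of $G$, so no separate multiplicity discussion is needed.
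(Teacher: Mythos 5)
Your proof is correct and follows essentially the same route as the paper: both arguments rest on the pair of inverse Newton formulas $f(z_0+k)=\sum_{j=0}^{k}\binom{k}{j}\Delta^{j}f(z_0)$ and $\Delta^{k}f(z_0)=\sum_{j=0}^{k}(-1)^{k-j}\binom{k}{j}f(z_0+j)$ to translate the height-$n$ condition into the pointwise statement $f(z_0)=\cdots=f(z_0+n-1)=0$, $f(z_0+n)\neq 0$, after which the factorization by the simple linear factors of $(z-z_0)^{\underline{n}}$ is immediate. If anything, you are slightly more explicit than the paper in justifying the analyticity of $g=f/(z-z_0)^{\underline{n}}$ via removable singularities and in computing $g(z_0+n)=f(z_0+n)/n!$, steps the paper leaves implicit.
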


\begin{proof}
Let $z_0$ be a shifting zero of $f$ with height $n$ in $G$.
It is known that for any $z\in G$ and $k\in \N$
\begin{equation}
f(z+k)=\sum_{j=0}^k \begin{pmatrix} k  \\ j \end{pmatrix}\Delta^jf(z).
\label{4.0012}
\end{equation}
Since $\Delta^j f(z_0)$ vanish for $0\leq j\leq k<n$, \eqref{4.0012} implies that $f(z_0+k)=0$ for $0\leq k<n$. Further, setting $z=z_0$ and $k=n$ in \eqref{4.0012}, we see that $f(z_0+n)=
\Delta^nf(z_0)\ne0$. Hence we can write $f(z)$ in the form \eqref{fzero.eq} and $g(z_0+n)\neq 0$.
%
%
%
%
%

Let us prove another direction as follows. We also have for any $z\in \C$ and $k\in \N$
\begin{equation}
\Delta^k f(z)=\sum_{j=0}^k \begin{pmatrix} k\\ j \end{pmatrix}(-1)^{k-j}f(z+j).\label{4.002}
\end{equation}
Assume that $f$ is of the form \eqref{fzero.eq} and $g(z_0+n)\neq 0$. Then by means of \eqref{4.002}, $f(z_0)$ and all difference $\Delta^k f(z_0)$ vanish for $0<k<n$, but $\Delta^n f(z_0)\neq 0$. That proves $z_0$ is a shifting zero of $f$ with height $n$ in $G$. For formulas \eqref{4.0012} and \eqref{4.002}, see e.g.,~\cite[Page 14]{KP2001},~\cite[Page 4]{Norlund1924}.
\end{proof}



\begin{corollary}\label{n-1.cor}
Let $f$ be analytic in the domain $G\subset\C$ and let $n\in\N$. Suppose that $z_0, z_0+1,\ldots, z_0+n\in G$.
If $z_0$ is a shifting zero of $f$ with height $n$ in $G$, then $z_0$ is a shifting zero of $\Delta f$ with height $n-1$.
\end{corollary}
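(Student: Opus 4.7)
The plan is to verify the defining conditions of a shifting zero directly for $\Delta f$ at the point $z_0$. The crucial observation is the trivial identity $\Delta^k(\Delta f) = \Delta^{k+1} f$, valid for every $k\ge 0$ by definition of iterated difference. Evaluating at $z_0$: for $0\le k<n-1$ we have $k+1<n$, so the hypothesis that $z_0$ is a shifting zero of $f$ with height $n$ yields $\Delta^{k+1}f(z_0)=0$, hence $\Delta^k(\Delta f)(z_0)=0$; and for $k=n-1$ we get $\Delta^{n-1}(\Delta f)(z_0)=\Delta^n f(z_0)\ne 0$. These are exactly the conditions required for $z_0$ to be a shifting zero of $\Delta f$ with height $n-1$. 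The required translates $z_0,z_0+1,\dots,z_0+n-1$ lie in the domain of $\Delta f$ because $z_0,\dots,z_0+n\in G$ by hypothesis.

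As an alternative route, one could invoke Theorem~\ref{zero.theorem} to write $f(z)=(z-z_0)^{\underline{n}}g(z)$ with $g$ analytic in $G$ and $g(z_0+n)\ne 0$, then apply the discrete product rule $\Delta(uv)(z)=u(z+1)\Delta v(z)+v(z)\Delta u(z)$ together with $\Delta(z-z_0)^{\underline{n}}=n(z-z_0)^{\underline{n-1}}$. Using the factorizations $(z+1-z_0)^{\underline{n}}=(z+1-z_0)(z-z_0)^{\underline{n-1}}$ and $(z-z_0)^{\underline{n}}=(z-z_0)^{\underline{n-1}}(z-z_0-n+1)$, one extracts $(z-z_0)^{\underline{n-1}}$ as a common factor and obtains $\Delta f(z)=(z-z_0)^{\underline{n-1}}h(z)$ with $h(z)=(z+1-z_0)g(z+1)-(z-z_0-n+1)g(z)$, whose value at $z_0+n-1$ equals $n\,g(z_0+n)\ne 0$. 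The conclusion then follows from the converse direction of Theorem~\ref{zero.theorem}.

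Neither approach presents a genuine obstacle; I would favor the first since it reduces to a one-line reindexing of the hypothesis. The only point worth a line of care is to check the domain condition, which is automatic from the assumption $z_0,\dots,z_0+n\in G$.
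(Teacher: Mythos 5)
Your proof is correct, and your preferred (first) route is genuinely different from the paper's. The paper proves the corollary by invoking Theorem~\ref{zero.theorem} to write $f(z)=(z-z_0)^{\underline{n}}g(z)$ with $g(z_0+n)\neq 0$, then computes
$\Delta f(z)=(z-z_0)^{\underline{n-1}}Q(z)$ with $Q(z)=(z+1-z_0)g(z+1)-(z-z_0-n+1)g(z)$ and $Q(z_0+n-1)=n\,g(z_0+n)\neq 0$ --- exactly the computation you sketch as your ``alternative route.'' Your primary argument instead verifies the definition directly via the reindexing $\Delta^{k}(\Delta f)=\Delta^{k+1}f$, which is shorter, needs no appeal to Theorem~\ref{zero.theorem}, and (as you note) only requires the easy check that $z_0,\dots,z_0+n-1$ lie in the domain of $\Delta f$. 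What the paper's computational route buys is the explicit factorization $\Delta f=(z-z_0)^{\underline{n-1}}Q(z)$ together with the nonvanishing of $Q$ at $z_0+n-1$; the authors reuse precisely this information later (in the proofs of Lemma~\ref{deg.lemma} and Lemma~\ref{deg2.lemma} they refer back to ``the arguments in the proof of Corollary~\ref{n-1.cor}'' to control which falling-factorial factors survive in $\Delta P$ and $\Delta^{k}P$). So your one-line proof is the cleaner way to establish the corollary as stated, while the paper's version doubles as a lemma-in-disguise for the degree computations in Section~3 and Section~4.
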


\begin{proof}
Since $z_0$ is a shifting zero of $f$ with height $n$ in $G$, there
exists an analytic function $g(z)$ in $G$ such that
    $$
    f(z)=(z-z_0)^{\underline{n}}g(z)
    $$
and $g(z_0+n)\neq 0$. Moreover, we have
    \begin{equation*}
    \begin{split}
    \Delta f(z)&=(z+1-z_0)^{\underline{n}}g(z+1)
    -(z-z_0)^{\underline{n}}g(z)\\
    &=(z-z_0)^{\underline{n-1}}\big((z+1-z_0)g(z+1)-(z-z_0-n+1)g(z)\big)\\
    &=(z-z_0)^{\underline{n-1}}Q(z).
    \end{split}
    \end{equation*}
We see that $Q(z_0+n-1)=ng(z_0+n)\neq 0$. It gives us that $z_0$ is a shifting zero of $\Delta f$ with height $n-1$ from Theorem~\ref{zero.theorem}.
\end{proof}

The next theorem puts the focus on an entire function of the order of growth less than 1, which is a particular case of Theorem~\ref{zero.theorem}.
Let $f$ be an entire function of the order of growth $\rho(f)<1$. By means of~\cite[Theorem 1.1]{Ishizaki-Wen}, it follows
that $f$ can be written by a binomial (factorial) series in $\C$
\begin{equation}
f(z)=\sum_{j=0}^\infty a_j(z-z_0)^{\U{j}}=a_0+a_1(z-z_0)^{\U{1}}+a_2(z-z_0)^{\U{2}}+\cdots+a_j(z-z_0)^{\U{j}}+\cdots,\label{4.003}
\end{equation}
where the sequence $\{a_j\}$ satisfies
\begin{equation}
\chi(\{a_j\})=\limsup_{j\to\infty}\frac{j\log j}{-\log |a_j|}<1.\label{4.004}
\end{equation}

\begin{theorem}
Let $f$ be an entire function of the order of growth less than 1, and let $n\in\N$. The point $z_0$ is a shifting zero of $f(z)$ with height $n$ in $\C$ if and only if there exists an entire function $g$ of the order of growth less than 1 such that
    \begin{equation}\label{fzero2.eq}
    f(z)=(z-z_0)^{\underline{n}}g(z),
    \end{equation}
and $g(z_0+n)\neq 0$. Furthermore, $f$ and $g$ are represented by binomial series
$$
f(z)=\sum_{j=n}^\infty a_j(z-z_0)^{\U{j}}\quad\text{and}\quad g(z)=\sum_{j=0}^\infty a_{n+j}(z-z_0-n)^{\U{j}}\ .
$$
\end{theorem}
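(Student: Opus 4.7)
The plan is to reduce the factorization claim to Theorem~\ref{zero.theorem} and to use the binomial (factorial) series representation \eqref{4.003}--\eqref{4.004} to identify the explicit series and control the order of growth of $g$.

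For the forward direction, assume $z_0$ is a shifting zero of $f$ with height $n$. By \cite[Theorem~1.1]{Ishizaki-Wen} we may expand $f(z) = \sum_{j=0}^{\infty} a_j (z-z_0)^{\underline{j}}$ in $\C$ with $\chi(\{a_j\}) < 1$. Evaluating at $z = z_0+k$ and using that $k^{\underline{j}}$ vanishes for $j>k$ gives $f(z_0+k) = \sum_{j=0}^{k} a_j k^{\underline{j}}$. The shifting-zero conditions $f(z_0+k)=0$ for $0 \le k < n$ then force $a_0 = a_1 = \cdots = a_{n-1} = 0$ by induction on $k$, while $f(z_0+n) = n!\, a_n \ne 0$ yields $a_n \ne 0$. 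This establishes the first series in the statement.

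Next, I would invoke the factorial identity $(z-z_0)^{\underline{j}} = (z-z_0)^{\underline{n}}(z-z_0-n)^{\underline{j-n}}$ for $j \ge n$ to factor out $(z-z_0)^{\underline{n}}$ and define $g(z) := \sum_{k=0}^{\infty} a_{n+k}(z-z_0-n)^{\underline{k}}$, which gives $f(z) = (z-z_0)^{\underline{n}} g(z)$ and, evaluating the series at $z=z_0+n$, $g(z_0+n) = a_n \ne 0$. To conclude $\rho(g) < 1$, I would check that $\chi(\{a_{n+k}\}_{k \ge 0}) < 1$; this is routine because $(j-n)\log(j-n)/(j\log j) \to 1$ as $j\to\infty$, after which \cite[Theorem~1.1]{Ishizaki-Wen} delivers $g$ as an entire function of order less than $1$.

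For the converse, given an entire $g$ of order less than $1$ with $g(z_0+n) \ne 0$, expand $g$ around $z_0+n$ as $g(z) = \sum_{k=0}^{\infty} b_k (z-z_0-n)^{\underline{k}}$ with $b_0 = g(z_0+n) \ne 0$. Multiplying by $(z-z_0)^{\underline{n}}$ and applying the factorial identity in reverse produces $f(z) = \sum_{k=0}^{\infty} b_k (z-z_0)^{\underline{n+k}}$, which is a binomial series whose coefficient index $\chi$ still satisfies \eqref{4.004}; hence $f$ is entire of order less than $1$, and the shifting-zero property follows from Theorem~\ref{zero.theorem}, with the identification $a_{n+k}=b_k$ matching the two claimed series. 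The main (minor) obstacle is this growth-index bookkeeping: carefully verifying that the index-shifted coefficient sequence inherits $\chi < 1$, so that the characterisation of order-$<1$ entire functions by binomial series applies in both directions. Everything else is direct manipulation of the factorial identity and evaluation at the shifted points $z_0, z_0+1, \ldots, z_0+n$.
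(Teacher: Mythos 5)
Your proposal is correct and follows essentially the same route as the paper: expand $f$ in a binomial series via \cite[Theorem~1.1]{Ishizaki-Wen}, use the shifting-zero conditions to kill $a_0,\dots,a_{n-1}$ and force $a_n\neq0$, factor out $(z-z_0)^{\underline{n}}$ with the identity $(z-z_0)^{\underline{j}}=(z-z_0)^{\underline{n}}(z-z_0-n)^{\underline{j-n}}$, and reverse the computation for the converse. The only difference is cosmetic (you extract the vanishing of the coefficients from $f(z_0+k)=0$ rather than from $\Delta^k f(z_0)=k!\,a_k$), and your explicit check that the shifted coefficient sequence still satisfies $\chi<1$ is a point the paper passes over silently.
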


\begin{proof}
We write $f$ as in \eqref{4.003}.
Suppose that $z_0$ is a shifting zero with height $n$. We have $\Delta^k f(z_0)=0$ for every $0\leq k\leq n-1$, and $\Delta^n f(z_0)\neq 0$. Then $a_k=0$ for every $0\leq k\leq n-1$ and $a_n\neq 0$, which implies
    \begin{equation*}
    \begin{split}
    f(z)=\sum_{j=n}^\infty a_j(z-z_0)^{\U{j}}&=a_n(z-z_0)^{\U{n}}+a_{n+1}(z-z_0)^{\U{n+1}}+a_{n+2}(z-z_0)^{\U{n+2}}+\cdots\\
    &=(z-z_0)^{\U{n}}\left(a_n+a_{n+1}(z-z_0-n)^{\U{1}}+a_{n+2}(z-z_0-n)^{\U{2}}+\cdots\right).
    \end{split}
    \end{equation*}
We set $g$ as
\begin{equation}
g(z)=\sum_{j=0}^\infty a_{n+j}(z-z_0-n)^{\U{j}}=a_n+a_{n+1}(z-z_0-n)^{\U{1}}+a_{n+2}(z-z_0-n)^{\U{2}}+\cdots.\label{4.005}
\end{equation}
Then $g(z)$ is an entire function of the order of growth $\rho(g)<1$
 and $g(z_0+n)=a_n\neq 0$.
Therefore, we proved that $f$ is of the form \eqref{fzero2.eq}.
\vspace{0.25cm}

In what follows, we proceed to prove another direction of this theorem. Let $g$ be an entire function of the order of growth $\rho(g)<1$ such that $g(z_0+n)\neq 0$.
Then by \cite[Theorem 1.1]{Ishizaki-Wen}, $g$ can be supposed as in \eqref{4.005} satisfying \eqref{4.004}.
If $f$ is of the form \eqref{fzero2.eq}, then
    $$
    f(z)=(z-z_0)^{\U{n}}g(z)=(z-z_0)^{\U{n}}\left(\sum_{j=0}^\infty a_{n+j}(z-z_0-n)^{\U{j}}\right)
    =\sum_{j=n}^\infty a_j(z-z_0)^{\U{j}},
    $$
where $a_n\neq 0$. From \cite[Theorem 1.1]{Ishizaki-Wen}, it gives that $f$ is entire function of the order of growth $\rho(f)<1$.
Moreover, it is clear that $\Delta^k f(z_0)=0$ for every $0\leq k\leq n-1$ and $\Delta^n f(z_0)\neq 0$.
Therefore, $z_0$ is a shifting zero of $f$ with height $n$ in $\C$. We thus proved our assertion.
\end{proof}

\section{Difference radical}\label{dirad}

Let $P$ be a polynomial with degree $p$, and $z_1$ be a shifting zero of $P$ with height $n_1$ and $P(z_1-1)\neq 0$. Theorem~\ref{zero.theorem} states that there exists a polynomial $P_1$ with degree $p-n_1$ such that $P(z)=(z-z_1)^{\underline{n_1}}P_1(z)$.
Let $z_2$ be a shifting zero of $P_1$ with height $n_2$ and $P(z_2-1)\neq 0$.
Then there exists a polynomial $P_2$ with degree $p-n_1-n_2$ such that $P_1(z)=(z-z_2)^{\underline{n_2}}P_2(z)$.
Repeating this argument for finitely many times, we see that $P(z)$ can be written uniquely as
\begin{equation}
P(z)=A\prod_{j=1}^N(z-z_j)^{\underline{n_j}},\label{5.001}
\end{equation}
where $A$ is a nonzero constant, and $p=n_1+\cdots+n_N$.
Note that it is possible that $z_j=z_k$ even though $j\ne k$ in \eqref{5.001}.
We define the \emph{difference radical ${\rm rad}_{\Delta}(P)$} by product of these linear factors, i.e.,
\begin{equation}
{\rm rad}_{\Delta}(P)=\prod_{j=1}^N(z-z_j).\label{5.002}
\end{equation}
For example, if $g(z)=z^2(z-1)(z-2)=z^{\underline{1}}z^{\underline{3}}$, then
${\rm rad}_{\Delta}(g)=z^2$.


In \cite{IKLT} the definition of $\kappa$-difference radical ${\rm r}\tilde{\rm a}{\rm d}_{\kappa}(P)$ of a polynomial $P$ is given as
\begin{equation}
    {\rm r}\tilde{\rm a}{\rm d}_{\kappa}(P)=\prod_{w\in\C}(z-w)^{d_{\kappa}(w)},\label{5.0039}
\end{equation}
 where
\begin{equation}
d_{\kappa}(w)=d_{\kappa}(w,P)=\text{ord}_w(P)-\min\{\text{ord}_w(P),\text{ord}_{w+\kappa}(P)\},\label{5.0033}
\end{equation}
with $\text{ord}_w(P)\geq 0$ being the order of zero of the polynomial $P$ at $w\in\C$.
We note here even if $\kappa=1$, the definition of $\kappa$-difference radical is not the same as our definition above.
For example, if $g(z)=z^2(z-1)(z-2)$, then ${\rm r}\tilde{\rm a}{\rm d}_{1}(g)=z(z-2)$, which is different from ${\rm rad}_{\Delta}(g)=z^2$.
On the other hand, the ideas in~\cite{IKLT} work for considering some problems even though the definitions are different. Actually, there are several properties in common.
For the sake of simplicity, we write $d_1(w)$ given in \eqref{5.0033} as $d(w)$, and write
${\rm r}\tilde{\rm a}{\rm d}_1(P)$ given in \eqref{5.0039} as ${\rm r}\tilde{\rm a}{\rm d}(P)$.
We have the following proposition.

\begin{proposition}\label{radical degree}
Let $P$ be a polynomial. Then $\deg{\rm rad}_{\Delta}(P)=\deg{\rm r}\tilde{\rm a}{\rm d}(P)$.
\end{proposition}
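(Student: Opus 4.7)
The plan is to prove $\deg{\rm r}\tilde{\rm a}{\rm d}(P) = N$ by induction on $\deg P$, since $\deg{\rm rad}_\Delta(P) = N$ is immediate from \eqref{5.002}. The base case $\deg P = 0$ is trivial. For the inductive step I would invoke the first stage of the construction preceding \eqref{5.001} to write $P(z) = (z-z_1)^{\underline{n_1}} P_1(z)$, where $z_1$ is a shifting zero of $P$ of height $n_1$ with $P(z_1-1) \neq 0$ and $P_1$ is the quotient polynomial of degree $\deg P - n_1$ supplied by Theorem~\ref{zero.theorem}. Since $P_1$ admits an analogous factorization with $N-1$ factors, the inductive hypothesis gives $\deg{\rm r}\tilde{\rm a}{\rm d}(P_1) = N-1$, so the task reduces to proving the local increment $\sum_{w \in \C} d(w) = \sum_{w \in \C} d_1(w) + 1$, where $d_1$ is defined via \eqref{5.0033} with $P_1$ in place of $P$.

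Writing $m(w) = {\rm ord}_w(P)$, $m_1(w) = {\rm ord}_w(P_1)$, and $I_1 = \{z_1, z_1+1, \ldots, z_1+n_1-1\}$ for the support of the factor $(z-z_1)^{\underline{n_1}}$, one has $m(w) = m_1(w) + \chi_{I_1}(w)$. Two boundary facts drive the comparison: first, $m(z_1-1) = 0$ from the algorithmic constraint $P(z_1-1) \neq 0$; and second, $m(z_1+n_1) = 0$ because formula \eqref{4.0012} with $k = n_1$ yields $P(z_1+n_1) = \Delta^{n_1} P(z_1) \neq 0$ from the height hypothesis. Both facts pass to $m_1$ since $\chi_{I_1}$ vanishes at $z_1-1$ and at $z_1+n_1$.

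The comparison of $d(w)$ with $d_1(w)$ then splits into four cases according to whether $w$ and $w+1$ lie in $I_1$. When both are inside $I_1$ or both are outside, $m$ and $m_1$ differ by the same amount at $w$ and at $w+1$, so $d(w) = d_1(w)$. When $w \in I_1$ but $w+1 \notin I_1$, necessarily $w = z_1+n_1-1$; the second boundary fact gives $m_1(w+1) = 0$, hence $d(w) = m_1(w) + 1$ while $d_1(w) = m_1(w)$, a net contribution of $+1$. When $w+1 \in I_1$ but $w \notin I_1$, necessarily $w = z_1-1$; the first boundary fact gives $m_1(w) = 0$, forcing $d(w) = d_1(w) = 0$. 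Summing over $w$ yields the desired increment, closing the induction.

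The main obstacle is handling the non-linearity of the $\min$ appearing in \eqref{5.0033}; the two boundary facts sidestep this by ensuring that $m$ vanishes at the critical positions $z_1-1$ and $z_1+n_1$, which collapses the $\min$ at those positions and leaves exactly one net $+1$ contribution, arising from the right end of the chain $I_1$.
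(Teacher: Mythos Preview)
Your proof is correct and takes a genuinely different route from the paper's. The paper argues combinatorially: it views the multiset $Z_P$ of zeros of $P$ and decomposes it into maximal consecutive chains in two ways---building chains from their left endpoints to identify $N=\deg{\rm rad}_\Delta(P)$, and from their right endpoints to identify $M=\deg{\rm r}\tilde{\rm a}{\rm d}(P)$---then asserts that this chain decomposition of $Z_P$ is uniquely determined, forcing $N=M$. Your argument instead peels off a single falling-factorial factor $(z-z_1)^{\underline{n_1}}$ and tracks the effect on $\sum_w d(w)$ directly via the case analysis, with the two boundary facts $m(z_1-1)=0$ and $m(z_1+n_1)=0$ collapsing the $\max$ at the only two problematic positions. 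The paper's approach is more structural---it explains both degrees as counting the same combinatorial object---while yours is more self-contained and computational, and in particular does not rely on the informal uniqueness claim for the chain decomposition that the paper invokes at the end of its proof. Both arguments ultimately hinge on the same fact, namely that removing a full chain $I_1$ from $Z_P$ drops exactly one unit from each side; you make this explicit, the paper leaves it implicit in ``uniquely determined''.
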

\begin{proof}
When $P$ is a constant, the assertion holds. We assume that $P$ is a nonconstant polynomial.
Let $Z_P$ be the set of zeros of $P$, i.e., $Z_P=\{w\ | P(w)=0\}$ considering the multiplicities.
In other words, if $w$ is a zero of $P$ then $w$ appears ${\rm ord}_w(P)$ times in $Z_P$.

Firstly we consider $\deg{\rm rad}_{\Delta}(P)$.
The process to determine \eqref{5.001} is translated as follows. Choose $z_1\in Z_P$ satisfying $z_1-1\not\in Z_P$. If $z_1+1\in Z_P$ then we adopt $z_1+1$. Repeating this procedure, we adopt $z_1+k\in Z_P$, $k\in \N$ and stop $z_1+n_1-1\in Z_P$ when $z_1+n_1\not\in Z_P$. Then we find a finite sequence $L_1=\{z_1,z_1+1,\dots, z_1+n_1-1\}$. We may have $L_1=\{z_1\}$ in case $z_1+1\not\in Z_P$.
Next Choose $z_2\in Z_P\setminus L_1$ satisfying $z_2-1\not\in Z_P\setminus L_1$.
It may be possible $z_1=z_2$ when $z_1$ is a zero of $P$ with multiplicity $\geq2$.
If $z_2+1\in Z_P\setminus L_1$ then we adopt $z_2+1$. Repeating this procedure, we adopt $z_2+k\in Z_P\setminus L_1$, $k\in \N$ and stop $z_2+n_2-1\in Z_P\setminus L_1$ when $z_2+n_2\not\in Z_P\setminus L_1$. Then we find a finite sequence $L_2=\{z_2,z_2+1,\dots, z_2+n_2-1\}$. Continuing this process, we divide $Z_P$ into $N\in\N$ finite sequences, namely,
$$
Z_P=\bigcup_{j=1}^{N}L_j=\bigcup_{j=1}^{N}\{z_j,z_j+1,\dots, z_j+n_j-1\},
$$
with $z_j-1, z_j+n_j\not\in Z_P\setminus\left(\bigcup_{k=1}^{j-1}L_k\right)$ for any $j$. We see that $N$, say the number of $L_j$'s, coincides with ${\rm rad}_\Delta (P)$.

Secondly we observe $\deg{\rm r}\tilde{\rm a}{\rm d}(P)$. By \eqref{5.0033}, we have for $w\in Z_P$
\begin{equation}
d(w)=\text{ord}_w(P)-\min\{\text{ord}_w(P),\text{ord}_{w+1}(P)\}=\max\{0,\text{ord}_w(P)-\text{ord}_{w+1}(P)\}.\label{5.0037}
\end{equation}
If $w\in Z_P$ satisfies $\text{ord}_w(P)-\text{ord}_{w+1}(P)>0$, then $w$ contributes to ${\rm r}\tilde{\rm a}{\rm d}(P)$.
Choose $w_1\in Z_P$ satisfying $w_1+1\not\in Z_P$, which contributes to ${\rm r}\tilde{\rm a}{\rm d}(P)$.
If $w_1-1\in Z_P$ then we adopt $w_1-1$. Repeating this procedure, we adopt $w_1-k\in Z_P$, $k\in \N$ and stop $w_1-m_1+1\in Z_P$ when $z_1-m_1\not\in Z_P$. Then we find a finite sequence $K_1=\{w_1,w_1-1,\dots, w_1-m_1+1\}$. Note that $w_1$ can be chosen $\text{ord}_{w_1}(P)$ times in the first step in case $\text{ord}_{w_1}(P)\geq2$, and we may have $K_1=\{w_1\}$ in case $w_1-1\not\in Z_P$.
Next Choose $w_2\in Z_P\setminus K_1$ satisfying $w_2+1\not\in Z_P\setminus K_1$.
It may be possible $w_1=w_2$ when $w_1$ is a zero of $P$ with multiplicity $\geq2$ as mentioned above.
If $w_2-1\in Z_P\setminus K_1$ then we adopt $w_2-1$. Repeating this procedure, we adopt $w_2-k$, $k\in \N$ and stop $w_2-m_2+1\in Z_P\setminus K_1$ when $w_2-m_2\not\in Z_P\setminus K_1$. Then we find a finite sequence $K_2=\{w_2,w_2-1,\dots, w_2-m_2+1\}$. Continuing this process, we divide $Z_P$ into $M\in\N$ finite sequences, namely,
$$
Z_P=\bigcup_{j=1}^{M}K_j=\bigcup_{j=1}^{M}\{w_j,w_j-1,\dots, w_j-m_j+1\},
$$
with $w_j+1, w_j-m_j\not\in Z_P\setminus\left(\bigcup_{k=1}^{j-1}K_k\right)$ for any $j$.
Then we see that ${\rm r}\tilde{\rm a}{\rm d}(P)=\prod_{j=1}^{M}(z-w_j)=\prod_{w\in\C}(z-w)^{d_w}$.

The finite sequences $L_j\subset Z_P$ and $K_j\subset Z_P$ possess a common property that outsides of these sequences $z_j-1$, $z_j+n_j$, $w_j+1$, $w_j-m_j$ do not belong to $Z_P$. This division of $Z_P$ is uniquely determined, which implies $N=M$. This concludes that $\deg{\rm rad}_{\Delta}(P)=\deg{\rm r}\tilde{\rm a}{\rm d}(P)$.
\end{proof}

\begin{remark}\label{radrad}
Using the notations in the proof of Proposition~\ref{radical degree}, we have
$$
{\rm rad}_{\Delta}(P)=\prod_{j=1}^N(z-z_j)\quad \text{and}\quad {\rm r}\tilde{\rm a}{\rm d}(P)=\prod_{j=1}^M(z-w_j).
$$
Consider the example $g(z)=z^2(z-1)(z-2)$ which is treated just below \eqref{5.0033}. The zeros $\{0,0,1,2\}$ of $g$ are divided into two sequences $\{0\}$ and $\{0,1,2\}$. Thus we confirm that ${\rm rad}_{\Delta}(g)=z\cdot z=z^2$ and ${\rm r}\tilde{\rm a}{\rm d}(g)=z(z-2)$.
We observe another example $h(z)=(z+1)z^2(z-1)^3(z-2)^2(z-4)$.
The zeros $\{-1,0,0,1,1,1,2,2,4\}$ of $h$ are divided into four sequences $\{-1,0,1,2\}$, $\{0,1,2\}$, $\{1\}$ and $\{4\}$, which gives that ${\rm rad}_{\Delta}(g)=(z+1)z(z-1)(z-4)$ and ${\rm r}\tilde{\rm a}{\rm d}(g)=(z-1)(z-2)^2(z-4)$.
\end{remark}


Suppose that $f$ and $g$ are entire functions, and suppose that $z_1\in\C$ is a shifting zero of $f$ with height $m$ and $z_2\in\C$ is a shifting zero of $g$ with height $n$.
Then from Theorem~\ref{zero.theorem}, there exist entire functions $F$ and $G$ such that
$f(z)=(z-z_1)^{\underline{m_1}}F(z)$ and $g(z)=(z-z_2)^{\underline{n_1}}G(z)$, where $1\leq m_1\leq m$ and $1\leq n_1\leq n$. If
    $$
    f(z)g(z)=(z-z_0)^{\underline{m_1+n_1}}F(z)G(z),
    $$
where $z_0$ is $z_1$ or $z_2$, then $z-z_0$ is called the \emph{common shifting divisor} of $f$ and $g$, which is the analogue of classical common divisor.  If $z_0=z_1$, then $z_2=z_1+m_1$, and if $z_0=z_2$, then $z_1=z_2+n_1$.
For example, if
$f(z)=z(z-1)(z-2)$ and $g(z)=(z-2)(z-3)(z-4)$, then $z$, $z-1$, $z-2$ are the common shifting divisor of $f$ and $g$.
In addition, $z-2$ is the common divisor of $f$ and $g$. If $f$ and $g$ do not have any nonconstant shifting common divisors, then $f$ and $g$ are called \emph{shifting prime}.

\begin{remark}\label{shifting radical}
Let $P$ and $Q$ be nonconstant polynomials. In general, we have
\begin{equation}
\deg({\rm rad}_\Delta (PQ))\leq
\deg({\rm rad}_\Delta (P))+\deg({\rm rad}_\Delta (Q)).\label{5.0025}
 \end{equation}
By the definition of the shifting prime, the equality in \eqref{5.0025} holds if $P$ and $Q$ are shifting prime. On the other hand, even though the equality in \eqref{5.0025} holds, $P$ and $Q$ are not always shifting prime.
For example, consider $P(z)=z$ and $Q(z)=z(z-1)$. Then ${\rm rad}_\Delta (PQ)=z^2$, and ${\rm rad}_\Delta (P)=z$, ${\rm rad}_\Delta (Q)=z$, which gives the equality in \eqref{5.0025} holds. However, $P$ and $Q$ are not shifting prime. In fact, we can write $P(z)=z^{\underline{1}}\cdot 1$, $Q(z)=z^{\underline{1}}\cdot (z-1)$ and $P(z)Q(z)=z^{\underline{1+1}}\cdot z$.
\end{remark}

In the following, we denote the greatest common divisor of $f$ and $g$ by $\text{gcd}(f,g)$.
It is well known that if two entire functions $f$ and $g$ are prime, then $\text{gcd}(f,f')$ and $\text{gcd}(g,g')$ are also prime. We state the difference analogue of this result as follows.

\begin{lemma}\label{shifting prime}
Suppose that $f$ and $g$ are entire functions and shifting prime. Then ${\rm gcd}(f,\Delta f)$
and ${\rm gcd}(g,\Delta g)$ are also shifting prime. Moreover, ${\rm gcd}(f,\Delta f)$
and ${\rm gcd}(g,\Delta g)$ are prime.
\end{lemma}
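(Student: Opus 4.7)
I would proceed by contradiction, exploiting the simple observation that if $f(w)=\Delta f(w)=0$ at some point $w$, then $f(w+1)=f(w)+\Delta f(w)=0$ as well. In particular, any shifting factor $(z-z_0)^{\underline{m_1}}$ of $\gcd(f,\Delta f)$ forces $f$ to vanish at one additional consecutive point, so it lifts to the longer shifting factor $(z-z_0)^{\underline{m_1+1}}$ of $f$. This lengthening-by-one is the engine of the proof.

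Assume for contradiction that $\gcd(f,\Delta f)$ and $\gcd(g,\Delta g)$ have a common shifting divisor $z-z_0$. By the definition of common shifting divisor preceding the lemma (and after swapping $f\leftrightarrow g$ to reduce to the case $z_0=z_1$), there exist integers $m_1,n_1\geq1$ such that $(z-z_0)^{\underline{m_1}}$ divides $\gcd(f,\Delta f)$ and $(z-z_0-m_1)^{\underline{n_1}}$ divides $\gcd(g,\Delta g)$, so that their product is the single run $(z-z_0)^{\underline{m_1+n_1}}$. Consequently $f$ and $\Delta f$ both vanish at $z_0,z_0+1,\dots,z_0+m_1-1$, and the $\Delta$-identity supplies the extra zero $f(z_0+m_1)=0$; symmetrically $g$ vanishes on the set $\{z_0+m_1,\dots,z_0+m_1+n_1\}$. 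Theorem~\ref{zero.theorem} then yields entire functions $\tilde F,\tilde G$ with
$$f(z)=(z-z_0)^{\underline{m_1+1}}\tilde F(z),\qquad g(z)=(z-z_0-m_1)^{\underline{n_1+1}}\tilde G(z).$$

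To produce the contradicting common shifting divisor of $f$ and $g$, I rewrite $f=(z-z_0)^{\underline{m_1}}F$ with $F(z)=(z-z_0-m_1)\tilde F(z)$ (still entire), and keep the factorization of $g$ as it stands. Concatenation of the falling factorials gives
$$f(z)g(z)=(z-z_0)^{\underline{m_1}}(z-z_0-m_1)^{\underline{n_1+1}}F(z)\tilde G(z)=(z-z_0)^{\underline{m_1+n_1+1}}F(z)\tilde G(z),$$
which by the definition exhibits $z-z_0$ as a common shifting divisor of $f$ and $g$, contradicting the shifting primality of $f$ and $g$. The second assertion, that the two gcds are classically prime, is the special case $m_1=n_1=1$: a shared classical zero $z_0$ of the two gcds forces $f(z_0)=f(z_0+1)=0$ and $g(z_0)=g(z_0+1)=0$, and the same concatenation argument produces the common shifting divisor $z-z_0$ of $f$ and $g$. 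The only real hurdle is the bookkeeping in the concatenation step; once one sees that the ``$+1$'' absorbed into the shifting part of $g$ is balanced by the extra factor $(z-z_0-m_1)$ pulled out of $f$, the definition of common shifting divisor is matched exactly.
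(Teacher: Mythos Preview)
Your argument is correct and follows essentially the same route as the paper's proof: assume a common shifting divisor of the two gcds, lengthen each falling-factorial factor by one, concatenate, and obtain a common shifting divisor of $f$ and $g$. The paper invokes Corollary~\ref{n-1.cor} to perform the lengthening step, whereas you use the equivalent elementary identity $f(w+1)=f(w)+\Delta f(w)$; the bookkeeping in the concatenation is organized slightly differently (you absorb the extra linear factor into $F$, the paper leaves it outside), but the content is the same.

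One small wording issue: the second assertion is not literally ``the special case $m_1=n_1=1$'' of the first. In the first part the two shifting factors of the gcds start at $z_0$ and $z_0+m_1$ respectively, whereas a shared \emph{classical} zero means both gcds vanish at the \emph{same} point $z_0$. Your stated facts (``$f(z_0)=f(z_0+1)=0$ and $g(z_0)=g(z_0+1)=0$'') are exactly what is needed, and then writing $f=(z-z_0)^{\underline{1}}\cdot\bigl((z-z_0-1)\tilde F\bigr)$ and $g=(z-z_0-1)^{\underline{1}}\cdot\bigl((z-z_0)\tilde G\bigr)$ exhibits $z-z_0$ as a common shifting divisor; so the argument is a close parallel rather than a literal specialization. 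This is cosmetic and does not affect correctness.
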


\begin{proof}
Assume that $\text{gcd}(f,\Delta f)$ and $\text{gcd}(g,\Delta g)$ have a common shifting divisor. This leads that there exist $z_1$, $z_2\in\C$, $m$, $n\in\N$ and entire functions $J$ and $K$ such that
    \begin{equation}\label{Df.eq}
    \text{gcd}(f,\Delta f)=(z-z_1)^{\underline{m}}J(z),
    \end{equation}
and
    \begin{equation}\label{Dg.eq}
    \text{gcd}(g,\Delta g)=(z-z_2)^{\underline{n}}K(z).
    \end{equation}
We assume that $z_2=z_1+m$ without loss of generality. Then we have
    $$
    \text{gcd}(f,\Delta f)\text{gcd}(g,\Delta g)=
    (z-z_1)^{\underline{m+n}}J(z)K(z).
     $$
It follows from \eqref{Df.eq}, \eqref{Dg.eq} and Corollary \ref{n-1.cor} that
    $f(z)=(z-z_1)^{\underline{m+1}}J_1(z)$ and $g(z)=(z-z_2)^{\underline{n+1}}K_1(z)$, where $J_1$ and $K_1$ are entire functions. In this way, we have
    $$
    f(z)g(z)=(z-z_1)^{\underline{m+n+1}}(z-z_1+m)J_1(z)K_1(z).
    $$
It shows that $f$ and $g$ have a common shifting divisor $z-z_1$, which is a contradiction with our assumption. We thus proved the first assertion.

Assume that $\text{gcd}(f,\Delta f)$ and $\text{gcd}(g,\Delta g)$ are not prime.
Let $z-z_0$ be the common divisor of $\text{gcd}(f,\Delta f)$
and $\text{gcd}(g,\Delta g)$. Then $\text{gcd}(f,\Delta f)=(z-z_0)S(z)$ and $\text{gcd}(g,\Delta g)=(z-z_0)T(z)$, where $S$ and $T$ are entire functions.
By Corollary~\ref{n-1.cor}, we obtain that $f(z)=(z-z_0)^{\underline{2}}S_1(z)$ and $g(z)=(z-z_0)^{\underline{2}}T_1(z)$, where $S_1$ and $T_1$ are entire functions. It implies that $f$ and $g$ are not shifting prime, which is a contradiction to our assumption. Therefore, we proved Lemma~\ref{shifting prime}.
\end{proof}


Let $a$, $b$ and $c$ be relatively prime polynomials such that not all of them are identically zero.
If they satisfy $a+b=c$, then we have \eqref{smt} by the Stothers--Mason theorem.
For a polynomial $P$, we write $\tilde{n}=\deg({{\rm r}\tilde{\rm a}{\rm d}}(P))$, where ${\rm r}\tilde{\rm a}{\rm d}(P)$ is given in~\eqref{5.0039} with $\kappa=1$.
The difference analogue of the Stothers--Mason theorem is given in \cite[Theorem~3.1]{IKLT}, which states that
if $a+b=c$, then
\begin{equation}\label{mz}
\max\{\deg(a),\deg(b),\deg(c)\}\leq \tilde{n}(a)+\tilde{n}(b)+\tilde{n}(c)-1.
\end{equation}
The inequality \eqref{mz} is sharp, see~\cite[Example~3.2]{IKLT}, which is mentioned below in Remark~\ref{sharpm2}.
In general, $\tilde{n}(abc)\leq\tilde{n}(a)+\tilde{n}(b)+\tilde{n}(c)$, and equality holds under some conditions.
We here consider the shifting prime condition and state a different version of difference analogue of the Stothers--Mason theorem in terms of ${\rm rad}_\Delta(P)$ defined by~\eqref{5.002} as follows.

\begin{theorem}\label{abc.theorem}
Let $a$, $b$ and $c$ be relatively shifting prime polynomials such that
    $$
    a+b=c
    $$
and such that $a$, $b$ and $c$ are not all constant. Then
    \begin{equation}\label{diff-abc.eq}
    \max\{\deg(a),\deg(b),\deg(c)\}\leq \deg({{\rm rad}_\Delta(abc)})-1=\tilde{n}(abc)-1.
    \end{equation}
\end{theorem}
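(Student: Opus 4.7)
The plan is to adapt the classical Wronskian proof of the Stothers--Mason theorem to the discrete setting, with the Casoratian $C(a,b) := a(z)b(z+1) - a(z+1)b(z)$ playing the role of $ab'-a'b$. Bilinearity together with $a+b=c$ yields the invariance $C(a,b) = C(a,c) = -C(b,c)$. For the upper bound, the leading monomials of $a(z)b(z+1)$ and $a(z+1)b(z)$ coincide, so $\deg C(a,b) \leq \deg a + \deg b - 1$; the analogous bounds for $C(a,c)$ and $C(b,c)$ combined with the invariance give
\[
\deg C(a,b) \leq \deg a + \deg b + \deg c - \max\{\deg a, \deg b, \deg c\} - 1.
\]

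For the matching lower bound, the key lemma is
\[
\deg \gcd\bigl(P(z), P(z+1)\bigr) = \deg P - \deg {\rm rad}_\Delta(P)
\]
for any nonzero polynomial $P$, which follows from the pointwise identity $\deg\gcd(P(z),P(z+1)) = \sum_w \min\{{\rm ord}_w(P), {\rm ord}_{w+1}(P)\}$, the formula~\eqref{5.0037} for $d(w)$, and Proposition~\ref{radical degree}. The gcd $\gcd(a,a(z+1))$ divides both $a(z)$ and $a(z+1)$, hence divides $C(a,b)$; the same applies to $b$, and to $c$ via the invariance $C(a,c) = C(a,b)$. Under the shifting prime hypothesis these three divisors are pairwise coprime: if a common root $z_0$ existed between, say, $\gcd(a, a(z+1))$ and $\gcd(b, b(z+1))$, then $a(z_0) = a(z_0+1) = b(z_0) = b(z_0+1) = 0$, so $z_0$ is a shifting zero of both $a$ and $b$ of height at least $2$, and choosing $m_1 = 1$ at $z_1 = z_0$ for $a$ together with $n_1 = 1$ at $z_2 = z_0 + 1$ for $b$ manufactures a common shifting divisor $z - z_0$, contradicting the hypothesis. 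Using Remark~\ref{shifting radical} to split $\deg{\rm rad}_\Delta(abc) = \deg{\rm rad}_\Delta(a) + \deg{\rm rad}_\Delta(b) + \deg{\rm rad}_\Delta(c)$, I then obtain
\[
\deg a + \deg b + \deg c - \deg{\rm rad}_\Delta(abc) \leq \deg C(a,b),
\]
and combining this with the upper bound immediately yields~\eqref{diff-abc.eq}.

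The main obstacle is the degenerate case $C(a,b) \equiv 0$, in which the lower bound is vacuous and must be handled separately. The identity $a(z)b(z+1) = a(z+1)b(z)$ forces $a/b$ to be a rational function fixed by $z \mapsto z+1$, hence constant, so $a = \lambda b$ and $c = (\lambda+1)b$ for some $\lambda \in \C \setminus \{0,-1\}$ (the excluded values would force $a \equiv 0$ or $c \equiv 0$). The shifting prime hypothesis on $(a,b)$ then forces every shifting factor of $b$ to have height $1$: any block $(z-z_j)^{\underline{n_j}}$ with $n_j \geq 2$ in the factorization of $b$ would yield the common shifting divisor $z - z_j$ of $a$ and $b$ through the choices $m_1 = 1$ at $z_j$ for $a = \lambda b$ and $n_1 = 1$ at $z_j + 1$ for $b$, which is admissible since $z_j + 1$ is then a shifting zero of $b$ of height $n_j - 1 \geq 1$. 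Consequently $\deg b = \deg{\rm rad}_\Delta(b) =: N$ and $\deg{\rm rad}_\Delta(abc) = 3N$, so \eqref{diff-abc.eq} reduces to $N \leq 3N - 1$, which holds whenever $b$ is nonconstant.
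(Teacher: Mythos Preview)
Your proof is correct and follows essentially the same route as the paper's. Your Casoratian $C(a,b)=a(z)b(z+1)-a(z+1)b(z)$ is exactly $-(b\,\Delta a-a\,\Delta b)$, and $\gcd(P(z),P(z+1))=\gcd(P,\Delta P)$, so the divisibility and degree-count arguments are the paper's Lemma~\ref{shifting prime} and Lemma~\ref{deg.lemma} in different notation; your derivation of the key degree identity via \eqref{5.0037} and Proposition~\ref{radical degree} is a legitimate alternative to the paper's direct computation from \eqref{5.001}.

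The one place you genuinely streamline the argument is the degenerate case $C(a,b)\equiv 0$: you observe directly that $a/b$ is a rational function invariant under $z\mapsto z+1$, hence constant, and then finish by the same ``simple height'' reasoning. The paper instead passes through the first-order equation $f(z+1)=(\kappa(z)+1)f(z)$ and invokes the $\Gamma$-function representation of Lemma~\ref{summation} to reach the same conclusion $a=\pi_1 p$, $b=\pi_2 p$, $c=\pi_3 p$ with constant $\pi_j$. Your shortcut avoids that machinery entirely and is preferable here.
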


In order to prove Theorem \ref{abc.theorem}, we need the following lemma corresponding to  the result~\cite[Lemma~2.1]{IKLT}, which reveals the degree relation between $P$ and $\text{gcd}(P,\Delta P)$ when $P$ is a polynomial.
\begin{lemma}\label{deg.lemma}
Let $P$ be a nonzero polynomial. Then
\begin{equation}
\deg(P)=\deg({\rm gcd}(P,\Delta P))+\deg({\rm rad}_{\Delta}(P)).\label{5.004}
\end{equation}
\end{lemma}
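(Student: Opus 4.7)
My plan is to reduce the problem about $\Delta$ to an ordinary polynomial shift and then compare pointwise orders of zeros. The central observation is that $P(z+1)=P(z)+\Delta P(z)$, so every common polynomial divisor of $P(z)$ and $\Delta P(z)$ divides $P(z+1)$, and every common divisor of $P(z)$ and $P(z+1)$ divides $\Delta P(z)=P(z+1)-P(z)$. Consequently
$$\gcd(P(z),\Delta P(z))=\gcd(P(z),P(z+1)),$$
which is the replacement I would work with throughout the argument.

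Next I would compute the degree pointwise. Since $\text{ord}_w(P(z+1))=\text{ord}_{w+1}(P)$ for every $w\in\C$, the standard formula for the order of a gcd of two polynomials gives
$$\deg(\gcd(P,\Delta P))=\sum_{w\in\C}\min\{\text{ord}_w(P),\,\text{ord}_{w+1}(P)\}.$$
Subtracting this identity from $\deg(P)=\sum_w \text{ord}_w(P)$ and using the elementary formula
$$\text{ord}_w(P)-\min\{\text{ord}_w(P),\,\text{ord}_{w+1}(P)\}=\max\{0,\,\text{ord}_w(P)-\text{ord}_{w+1}(P)\}=d(w),$$
which is exactly \eqref{5.0037}, I would obtain
$$\deg(P)-\deg(\gcd(P,\Delta P))=\sum_{w\in\C}d(w)=\deg({\rm r}\tilde{\rm a}{\rm d}(P)).$$

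To finish, Proposition~\ref{radical degree} gives $\deg({\rm r}\tilde{\rm a}{\rm d}(P))=\deg({\rm rad}_{\Delta}(P))$, which closes the chain and yields \eqref{5.004}. I do not foresee any serious obstacle here. The one moment requiring some care is the very first identity $\gcd(P,\Delta P)=\gcd(P(z),P(z+1))$; it is tempting to instead argue directly from the shifting-zero factorization $P(z)=A\prod_j(z-z_j)^{\underline{n_j}}$, using Corollary~\ref{n-1.cor} to note that each $(z-z_j)^{\underline{n_j-1}}$ divides both $P$ and $\Delta P$, but that route is technically unpleasant because these factors are not in general coprime as ordinary polynomials, so one would have to take their lcm and track overlapping shifting runs by hand. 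Passing through $P(z+1)$ sidesteps the bookkeeping entirely, and the remainder of the computation is mechanical.
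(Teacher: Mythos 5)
Your argument is correct, but it follows a genuinely different route from the paper. The paper works directly with the canonical factorization $P(z)=A\prod_{j=1}^N(z-z_j)^{\underline{n_j}}$ from \eqref{5.001}: using the computation in Corollary~\ref{n-1.cor} it identifies exactly which linear factor each run loses under $\Delta$, writes down ${\rm gcd}(P,\Delta P)=\prod_j(z-z_j)^{\underline{n_j-1}}$ explicitly as in \eqref{5.005}, and reads off the degree count $\sum n_j=\sum(n_j-1)+N$. You instead observe that ${\rm gcd}(P,\Delta P)={\rm gcd}(P(z),P(z+1))$ (correct, since each of $\Delta P$ and $P(z+1)$ is a $\C[z]$-combination of $P$ and the other), compute $\deg({\rm gcd})=\sum_w\min\{\operatorname{ord}_w(P),\operatorname{ord}_{w+1}(P)\}$ pointwise, recognize the difference as $\sum_w d(w)=\deg({\rm r}\tilde{\rm a}{\rm d}(P))$ via \eqref{5.0037}, and close with Proposition~\ref{radical degree}; since that proposition is established before the lemma and independently of it, there is no circularity. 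What each approach buys: yours cleanly sidesteps the run bookkeeping (which is genuinely the delicate point in justifying \eqref{5.005} when runs overlap, as with $g=z^{\underline{1}}z^{\underline{3}}$), but it outsources exactly that combinatorics to Proposition~\ref{radical degree} and yields only the degree identity; the paper's route produces the gcd itself as an explicit product, which is what gets generalized to ${\rm gcd}(P,\Delta P,\dots,\Delta^nP)$ in Lemma~\ref{deg2.lemma}, a payoff your pointwise method would not deliver as directly. One small quibble: your parenthetical worry that one "would have to take the lcm" of the factors $(z-z_j)^{\underline{n_j-1}}$ is not quite right --- the greedy construction of the runs ensures no run ends at a point where another begins at the successor, so the plain product in \eqref{5.005} already equals the gcd --- but this aside does not affect your proof.
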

\begin{proof}
By \eqref{5.001}, $P$ can be written by $P(z)=A\prod_{j=1}^N(z-z_j)^{\underline{n_j}}$ with a constant $A\ne0$.
It follows from the arguments in Corollary~\ref{n-1.cor},
$\Delta P$ does not have factor $z-z_j+n_j-1$, $j=1, 2, \dots, N$. Hence we have
\begin{equation}
{\rm gcd}(P(z),\Delta P(z))=\prod_{j=1}^m(z-z_j)^{\underline{n_j-1}}.\label{5.005}
\end{equation}
By \eqref{5.002} and \eqref{5.005}, we obtain \eqref{5.004}. We have thus proved Lemma~\ref{deg.lemma}.
\end{proof}

We also need to recall a summation property, see e.g., \cite[Page 48]{KP2001}, \cite[Pages 115--116]{Kohno1999}, \cite[327--328]{M-Thomson1933}.
\begin{lemma}\label{summation}
Let $R(z)$ be a rational function.
We write $R(z)$ in the form
\begin{eqnarray*}
R(z)=\rho\ \frac{\prod_{k=1}^n (z-\alpha_k)}{\prod_{j=1}^m (z-\beta_j)},
\end{eqnarray*}
where $\rho\ne0$, $\alpha_k$, $k=1, \dots, n$ and $j=1, \dots, m$ are complex numbers.
The first order linear homogeneous equation
\begin{eqnarray*}
y(z+1)=R(z)y(z)
\end{eqnarray*}
can be solved as
\begin{eqnarray}
y(z)=\pi(z)\rho^z\ \frac{\prod_{k=1}^n \Gamma(z-\alpha_k)}{\prod_{j=1}^m \Gamma(z-\beta_j)},\label{5.006}
\end{eqnarray}
where $\pi(z)$ is an arbitrary periodic function of period $1$.
\end{lemma}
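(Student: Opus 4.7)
The plan is to verify the formula by direct substitution using the functional equation of the Gamma function, and then establish uniqueness up to a period-$1$ multiplier by a quotient argument.

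First I would set
$$
y_0(z):=\rho^z\ \frac{\prod_{k=1}^n \Gamma(z-\alpha_k)}{\prod_{j=1}^m \Gamma(z-\beta_j)}
$$
and check that $y_0(z+1)=R(z)y_0(z)$. Shifting $z\mapsto z+1$ and invoking $\Gamma(w+1)=w\,\Gamma(w)$ on each factor gives $\Gamma(z+1-\alpha_k)=(z-\alpha_k)\Gamma(z-\alpha_k)$ and $\Gamma(z+1-\beta_j)=(z-\beta_j)\Gamma(z-\beta_j)$, so
$$
y_0(z+1)=\rho\cdot\rho^z\,\frac{\prod_{k=1}^n(z-\alpha_k)\Gamma(z-\alpha_k)}{\prod_{j=1}^m(z-\beta_j)\Gamma(z-\beta_j)}=R(z)\,y_0(z).
$$
Consequently, for any $1$-periodic function $\pi(z)$, the product $y(z)=\pi(z)y_0(z)$ satisfies
$y(z+1)=\pi(z+1)y_0(z+1)=\pi(z)R(z)y_0(z)=R(z)y(z)$, verifying that \eqref{5.006} indeed defines a solution.

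For the converse, suppose that $y(z)$ is an arbitrary (nonzero) solution of $y(z+1)=R(z)y(z)$. Form the quotient $\pi(z):=y(z)/y_0(z)$; then
$$
\pi(z+1)=\frac{y(z+1)}{y_0(z+1)}=\frac{R(z)y(z)}{R(z)y_0(z)}=\pi(z),
$$
so $\pi$ has period $1$, and $y(z)=\pi(z)y_0(z)$ is of the asserted form.

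There is essentially no serious obstacle here: the content of the lemma is the observation that the Gamma function converts the multiplicative shift relation $\Gamma(w+1)/\Gamma(w)=w$ into the rational factor $R(z)$, and all that is required is careful bookkeeping of the shifts on the numerator and denominator factors. The only mild subtlety is that the quotient $\pi=y/y_0$ is a priori meromorphic, with possible cancellations of zeros and poles between $y$ and $y_0$; but since the functional equation $\pi(z+1)=\pi(z)$ holds wherever both sides are defined, $\pi$ extends to a meromorphic $1$-periodic function, which is exactly what the statement permits.
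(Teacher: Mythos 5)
Your proof is correct: the direct verification via $\Gamma(w+1)=w\Gamma(w)$ and the quotient argument for the converse together establish exactly what the lemma asserts. The paper itself gives no proof of this lemma --- it is recalled from the cited references (Kelley--Peterson, Kohno, Milne-Thomson) --- and your argument is the standard one those sources use, so there is nothing to reconcile.
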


\bigskip
\noindent\textit{Proof of Theorem \ref{abc.theorem}}.
We may suppose that $\deg(c)=\max\{\deg(a),\deg(b),\deg(c)\}$ without loss of generality.
Since $a+b=c$, we have $\Delta a+\Delta b=\Delta c$. Multiplying the first equation by $\Delta a$, the second by $a$, and then subtracting them, we obtain
    \begin{equation}\label{abc.eq}
    b\Delta a-a\Delta b=c\Delta a-a\Delta c.
    \end{equation}
We see that $\text{gcd}(a,\Delta a)$, $\text{gcd}(b,\Delta b)$, $\text{gcd}(c,\Delta c)$ all divide $b\Delta a-a\Delta b$ by \eqref{abc.eq}.
Since $a$, $b$ and $c$ are relatively shifting prime, from Lemma
\ref{shifting prime} $\text{gcd}(a,\Delta a)$, $\text{gcd}(b,\Delta b)$, $\text{gcd}(c,\Delta c)$ are relatively prime. It yields that they are the factors of $b\Delta a-a\Delta b$. Then we have
    $$
    \deg(\text{gcd}(a,\Delta a))+\deg(\text{gcd}(b,\Delta b))
    +\deg(\text{gcd}(c,\Delta c))\leq \deg(a)+\deg(b)-1
    $$
provided that $b\Delta a-a\Delta b\ne0$.
We add $\deg(c)$ to both sides, and we apply Lemma~\ref{deg.lemma},
\begin{align*}
    \deg(c)&\leq \deg(a)-\deg(\text{gcd}(a,\Delta a))+
    \deg(b)-\deg(\text{gcd}(b,\Delta b))\\
    &\ \ \ +\deg(c)-\deg(\text{gcd}(c,\Delta c))-1\\
    &\leq \deg({{\rm rad}_\Delta(a)})+\deg({{\rm rad}_\Delta(b)})+\deg({{\rm rad}_\Delta(c)})-1.
\end{align*}
By means of Remark~\ref{shifting radical}, we obtain \eqref{diff-abc.eq} when $b\Delta a-a\Delta b\ne0$.\vspace{0.25cm}

Below we consider the case when $b\Delta a-a\Delta b=0$. From \eqref{abc.eq},
    $$
    \frac{\Delta a}{a}=\frac{\Delta b}{b}=\frac{\Delta c}{c}=\kappa(z),
    $$
where $\kappa(z)$ is a rational function. Then polynomials $a$, $b$ and $c$ are solutions of the first order linear difference equation $\Delta f(z)=\kappa(z) f(z)$, namely
    \begin{equation}\label{orderone.eq}
    f(z+1)=(\kappa(z)+1)f(z).
    \end{equation}
By means of Lemma~\ref{summation}, there exists a nonconstant polynomial $p$ so that $a=\pi_1 p$, $b=\pi_2 p$ and $c=\pi_3 p$, where $\pi_j$, $j=1, 2, 3$ are periodic functions with period $1$. Since a periodic function is a transcendental function or a constant, the only possibility is that $\pi_j$, $j=1, 2, 3$ are all constants.
We claim that every shifting zero of $p$ is simple height. Otherwise, $a$, $b$ and $c$ are not relatively shifting prime. Hence, we have
    $$
    \deg({\rm rad}_\Delta(abc))=\deg(a)+\deg(b)+\deg(c).
    $$
It is easy to see that inequality in \eqref{diff-abc.eq} holds in this case. The right equality follows from Proposition~\ref{radical degree}. We have thus proved Theorem~\ref{abc.theorem}.\qquad $\square$

\begin{remark}\label{sharpm2}
We could see the assertion of Theorem \ref{abc.theorem} is sharp. For example, let $a(z)=z(z-1)$, $b(z)=-(z-4)(z-5)$ and $c(z)=4(2z-5)$. We see that $a$, $b$ and $c$ are relatively shifting prime polynomials satisfying $a+b=c$. In addition, it is shown that $\max\{\deg(a),\deg(b),\deg(c)\}=2$, ${\rm rad}_\Delta(abc)=z(z-5/2)(z-4)$, and $\deg({\rm rad}_\Delta(abc))=3$.
\end{remark}
\vspace{0.25cm}
\section{Extension of the Stothers--Mason theorem with difference radical}\label{edsm}

We write $P$ in the form \eqref{5.001}, i.e., $P(z)=A\prod_{j=1}^N(z-z_j)^{\underline{n_j}}$.
The difference radical of truncation level $q\in\N$ for a polynomial $P(z)$ which is denoted by
    \begin{equation}\label{radq.eq}
    {\rm rad}_\Delta^q(P)={\rm gcd}\bigg(\prod_{j=1}^N(z-z_j)^{\underline{n_j}}, \ \prod_{j=1}^N(z-z_j)^{\underline{q}}\bigg).
    \end{equation}
It is a generalization of difference radical. When $q=1$ in \eqref{radq.eq}, it reduces to the difference radical of $P$ given in \eqref{5.002}.
We note that ${\rm rad}_\Delta^q(P)={\rm rad}_\Delta (P)$ holds for any $q\geq 2$ if and only if all the shifting zeros of $P$ are simple height.
It follows from \eqref{radq.eq},
\begin{equation}
\deg({\rm rad}_\Delta^q(P))=\deg\left(\prod_{j=1}^N(z-z_j)^{\underline{\min(n_j,q)}}\right)=\sum_{j=1}^N \min(n_j,q).\label{6.0017}
\end{equation}
In addition, we have an inequality
    \begin{equation}\label{small.eq}
   \deg\left( {\rm rad}_\Delta^q(P)\right)\leq q\cdot\deg \left({\rm rad}_\Delta(P)\right).
    \end{equation}


The following theorem extends Theorem~\ref{abc.theorem} for $m+1$ polynomials, where $m\in\N$, $m\geq2$. We state it as follows.
\begin{theorem}\label{fm.theorem}
Let $m\in \N$, $m\geq2$ and let $f_1,\ldots,f_{m+1}$  be pairwise relatively shifting prime polynomials with $\min_{1\leq i\leq m+1} \deg f_i\geq m-1$ satisfying the following functional equation
    \begin{equation}\label{a1am}
    f_1+ \cdots + f_m=f_{m+1},
    \end{equation}
and such that $f_1,\dots,f_{m}$ are linearly independent over $\C$. Then
    \begin{equation}\label{diffMasonineq}
    \begin{split}
    \max_{1\leq i\leq m+1}\{\deg f_i\}& \leq \deg({\rm rad}_{\Delta}^{m-1}(f_1f_2\cdots f_{m+1}))-\frac{1}{2}m(m-1)\\
    &\leq (m-1)\deg({\rm rad}_{\Delta}(f_1f_2\cdots f_{m+1}))-\frac12m(m-1).
    \end{split}
    \end{equation}
\end{theorem}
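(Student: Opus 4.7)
The plan is to apply the difference analogue of the Wronskian argument underlying the Brownawell--Masser generalization of the classical Mason--Stothers inequality. The central tool is the Casorati determinant
\begin{equation*}
W(z) := \det\bigl(\Delta^{j-1} f_i(z)\bigr)_{1 \leq i, j \leq m},
\end{equation*}
which is not identically zero because $f_1, \ldots, f_m$ are linearly independent over $\C$. For each $i = 1, \ldots, m+1$, set $G_i := {\rm gcd}(f_i, \Delta f_i, \ldots, \Delta^{m-1} f_i)$. The proof rests on three ingredients, each extending a component of the $m=2$ case already treated in Theorem~\ref{abc.theorem}: (a) the product $\prod_{i=1}^{m+1} G_i$ divides $W$; (b) $\deg(f_i) = \deg(G_i) + \deg({\rm rad}_\Delta^{m-1}(f_i))$ for each $i$; and (c) $\deg(W) \leq \sum_{i=1}^{m} \deg(f_i) - \tfrac{1}{2}m(m-1)$.

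For (a), each $G_i$ with $i \leq m$ divides the $i$-th row of the matrix defining $W$, hence divides $W$. For $G_{m+1}$, multilinearity in the last row of $W(f_1, \ldots, f_{m-1}, f_{m+1})$ combined with $f_{m+1} = \sum_{k=1}^{m} f_k$ collapses all summands carrying two equal rows and yields $\pm W$, and $G_{m+1}$ divides the last row of the former. Pairwise coprimality of the $G_i$ extends Lemma~\ref{shifting prime}: a common linear factor $(z-w)$ of $G_i$ and $G_j$ forces $w$ to be a shifting zero of both $f_i$ and $f_j$ of height at least $m$ (Theorem~\ref{zero.theorem}), so $(z-w)^{\underline{m}}$ divides both, which via the construction preceding Lemma~\ref{shifting prime} (taking $z_1 = w$, $z_2 = w+1$, $m_1 = n_1 = 1$) produces a common shifting divisor of $f_i$ and $f_j$, contradicting the pairwise shifting prime hypothesis. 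Ingredient (b) extends Lemma~\ref{deg.lemma}: writing $f_i$ via \eqref{5.001}, each chain of height $n$ at some $z_0$ contributes $\min(n, m-1)$ to $\deg({\rm rad}_\Delta^{m-1}(f_i))$ by \eqref{6.0017}, while iterating Corollary~\ref{n-1.cor} shows the same chain contributes $\max(0, n - m + 1)$ to $\deg(G_i)$; these sum to $n$. Ingredient (c) is the Leibniz bound $\deg \Delta^{j-1} f_i \leq \deg(f_i) - (j-1)$, noting that $\sum_{i=1}^{m}(\sigma(i) - 1) = \tfrac{1}{2} m(m-1)$ is constant over permutations $\sigma$; the hypothesis $\min_i \deg(f_i) \geq m-1$ guarantees $\Delta^{m-1} f_i \not\equiv 0$ so that $W$ is genuinely nondegenerate.

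Combining (a), (b), (c) yields $\sum_{i=1}^{m+1} (\deg(f_i) - \deg({\rm rad}_\Delta^{m-1}(f_i))) \leq \sum_{i=1}^{m} \deg(f_i) - \tfrac{1}{2}m(m-1)$, which rearranges to $\deg(f_{m+1}) \leq \sum_{i=1}^{m+1} \deg({\rm rad}_\Delta^{m-1}(f_i)) - \tfrac{1}{2} m(m-1)$. By \eqref{a1am}, every Casorati of an $m$-subset of $\{f_1, \ldots, f_{m+1}\}$ equals $\pm W$, so choosing the omitted function to have the largest degree produces the same bound for $\max_i \deg(f_i)$. The pairwise shifting prime hypothesis then upgrades the ${\rm rad}_\Delta^{m-1}$ analogue of the subadditivity in Remark~\ref{shifting radical} to the equality $\sum_{i=1}^{m+1} \deg({\rm rad}_\Delta^{m-1}(f_i)) = \deg({\rm rad}_\Delta^{m-1}(f_1 f_2 \cdots f_{m+1}))$, yielding the first inequality of \eqref{diffMasonineq}; the second is exactly \eqref{small.eq}. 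I expect the main obstacle to be the bookkeeping in step (a): extending Lemma~\ref{shifting prime} to higher-order differences across $m+1$ polynomials and verifying that a common factor of two $G_i$'s always originates from a genuinely common shifting chain rather than from coincidental cancellations among distinct chains.
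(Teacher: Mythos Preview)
Your proposal is correct and follows essentially the same route as the paper: the Casorati determinant of $f_1,\dots,f_m$, divisibility by each $G_i=\gcd(f_i,\Delta f_i,\dots,\Delta^{m-1}f_i)$ (with the replacement $f_i\leftrightarrow f_{m+1}$ by multilinearity), the degree identity $\deg f_i=\deg G_i+\deg({\rm rad}_\Delta^{m-1} f_i)$ from Lemma~\ref{deg2.lemma}, the Leibniz degree bound on the Casoratian, and finally the equality case of Remark~\ref{shifting radical2} together with \eqref{small.eq}. Your explicit justification that the $G_i$ are pairwise coprime (via a shared factor forcing a common shifting divisor) fills in a step the paper leaves implicit, and your anticipated ``main obstacle'' in step~(a) is handled exactly as you sketch.
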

%

In order to prove Theorem~\ref{fm.theorem}, we need two lemmas below.
We first consider a generalization of Lemma~\ref{deg.lemma}. For $x\in\R$, we write $[x]^+=\max\{x,0\}$.

\begin{lemma}\label{deg2.lemma}
Let $n\in \N$ and let $P$ be a polynomial with $\deg P\geq n$ represented by \eqref{5.001}. Then
\begin{equation}
\deg({\rm gcd}(P,\Delta P, \dots, \Delta^{n} P))=\sum_{j=1}^N [n_j-n]^+\label{6.0015}
\end{equation}
and
\begin{equation}\label{6.0014}
\deg(P)-\deg({\rm gcd}(P,\Delta P,\ldots,\Delta^{n} P))=\deg({\rm rad}_{\Delta}^{n}(P)).
\end{equation}
\end{lemma}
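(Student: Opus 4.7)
My plan is to compute the gcd by reducing it to a combinatorial statement about the runs $L_j := \{z_j, z_j + 1, \ldots, z_j + n_j - 1\}$ arising from \eqref{5.001}. The starting point is the identity
$$\gcd(P, \Delta P, \ldots, \Delta^n P) = \gcd(P(z), P(z+1), \ldots, P(z+n)),$$
which follows because by \eqref{4.0012} and \eqref{4.002} the two $(n+1)$-tuples generate the same ideal in $\C[z]$. Since $\text{ord}_b(P) = \#\{j: b \in L_j\}$, this gives for each $a$
$$\text{ord}_a\bigl(\gcd(P, \Delta P, \ldots, \Delta^n P)\bigr) = \min_{0\leq s\leq n}\#\{j: a+s\in L_j\}.$$

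The heart of the proof is the pointwise identity
$$\min_{0\leq s\leq n}\#\{j: a+s\in L_j\} = \#\{j: L_j\supseteq\{a, a+1, \ldots, a+n\}\},$$
which I would prove by induction on the number of factors $N$, peeling off the first-extracted (outermost) run $L_1$ and applying the inductive hypothesis to $P' = P/(z-z_1)^{\underline{n_1}}$. Three cases arise: (i) if $L_1 \supseteq \{a, \ldots, a+n\}$, both sides of the identity decrease by $1$ when $L_1$ is removed; (ii) if $L_1$ is disjoint from the window, both sides are unchanged; (iii) if $L_1$ partially overlaps the window, then the window must contain either $z_1 - 1$ or $z_1 + n_1$, neither of which lies in $Z_P$ by the greedy maximality of $L_1$ (as in the proof of Proposition~\ref{radical degree}), so this boundary point has multiplicity $0$ and lies in no $L_j$, forcing both sides to vanish.

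Summing the pointwise identity over $a$ and interchanging the order of summation gives
$$\deg\bigl(\gcd(P, \Delta P, \ldots, \Delta^n P)\bigr) = \sum_j \#\{a: L_j \supseteq \{a, \ldots, a+n\}\} = \sum_j [n_j - n]^+,$$
which is \eqref{6.0015}. Equation \eqref{6.0014} follows immediately, since $n_j - [n_j - n]^+ = \min(n_j, n)$ and hence $\deg(P) - \deg(\gcd) = \sum_j \min(n_j, n) = \deg({\rm rad}_\Delta^n(P))$ by \eqref{6.0017}. The main obstacle I expect is verifying the pointwise identity, particularly Case (iii): one needs the greedy structure of the decomposition \eqref{5.001} --- not just the existence of a factorization into falling factorials --- to conclude that the boundary points of $L_1$ lie outside $Z_P$.
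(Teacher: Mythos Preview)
Your proof is correct and takes a genuinely different route from the paper's.

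The paper argues by iterating Corollary~\ref{n-1.cor}: writing $P(z)=(z-z_j)^{\underline{n_j}}G_j(z)$ and applying $\Delta$ repeatedly, one obtains $\Delta^k P(z)=(z-z_j)^{\underline{n_j-k}}Q_{j,k}(z)$ with $Q_{j,k}(z_j+n_j-k)\neq 0$, so that each run contributes exactly $(z-z_j)^{\underline{[n_j-n]^+}}$ to the gcd. This yields the explicit factorisation \eqref{6.0035} directly, and \eqref{6.0015}--\eqref{6.0014} follow by counting degrees. The argument stays within the difference-calculus formalism and never leaves the falling-factorial representation.

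Your approach instead passes through the unimodular change of basis \eqref{4.0012}--\eqref{4.002} to replace $\gcd(P,\Delta P,\ldots,\Delta^n P)$ by $\gcd(P(z),P(z+1),\ldots,P(z+n))$, and then computes $\text{ord}_a$ pointwise as a minimum of multiplicities along a window. The key step---your pointwise identity---is proved combinatorially by induction on $N$, and your Case~(iii) correctly exploits the greedy maximality of $L_1$ (namely $z_1-1,\,z_1+n_1\notin Z_P$) exactly as recorded in the proof of Proposition~\ref{radical degree}. This is the crucial structural input, and you have identified it precisely; for later runs $L_j$ the same property holds relative to $Z_{P'}=Z_P\setminus\bigcup_{k<j}L_k$, which is what the induction needs.

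What each approach buys: the paper's argument is shorter and yields the gcd itself, not just its degree, but is somewhat terse about why no \emph{extra} factors appear. Your argument is more explicitly combinatorial, makes the dependence on the greedy decomposition transparent, and gives the order at every point (so it too determines the gcd, though you only state the degree). Both are valid; yours is a nice alternative that would sit well alongside the discussion in Remark~\ref{radrad}.
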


\begin{proof}
Let $z_j$ be a shifting zero of $P$ with height $n_j$ such that $P(z_j-1)\ne0$. We write $P(z)=(z-z_j)^{\underline{n_j}}G_j(z)$, where $G_j$ is a polynomial.
From the arguments in the proof of Corollary~\ref{n-1.cor}, $\Delta P(z)=(z-z_j)^{\underline{n_j-1}}Q_{j,1}(z)$, where $Q_{j,1}$ is a polynomial satisfying $Q_{j,1}(z_j+n_j-1)\ne0$. Let $k\in N$, $1\leq k\leq n$.
If $n_j\geq n+1$, then $\Delta^k P(z)=(z-z_j)^{\underline{n_j-k}}Q_{j,k}(z)$, where $Q_{j,k}$ is a polynomial satisfying $Q_{j,k}(z_j+n_j-k)=0$ for any $1\leq k\leq n-1$.
This implies that ${\rm gcd}(P,\Delta P, \dots, \Delta^{n} P)$ is divided by $(z-z_j)^{\underline{n_j-n}}$. If  $n_j\leq n$, then the factors in $(z-z_j)^{\underline{n_j}}$ never contribute to ${\rm gcd}(P,\Delta P, \dots, \Delta^{n} P)$.
Then
\begin{equation}
{\rm gcd}(P,\Delta P, \dots, \Delta^{n} P)=\prod_{j=1}^N (z-z_j)^{\underline{[n_j-n]^+}},\label{6.0035}
\end{equation}
which implies \eqref{6.0015}.
From \eqref{6.0015}, \eqref{5.001} and \eqref{6.0017}, we obtain \eqref{6.0014}. In fact,
\begin{align*}
\deg&({\rm gcd}(P,\Delta P, \dots, \Delta^{n} P))=\sum_{j=1}^N [n_j-n]^+=\sum_{j=1}^N \max(n_j-n,\ 0)\\
&=\sum_{j=1}^N n_j-\sum_{j=1}^N \min(n_j, n)=\deg(P)-\deg({\rm rad}_\Delta^n(P)).
\end{align*}
We have thus proved Lemma~\ref{deg2.lemma}.
\end{proof}

\begin{remark}\label{shifting radical2}
Let $n\in\N$ and let $P$ and $Q$ be nonconstant polynomials.
We have the following estimate as well as Remark~\ref{shifting radical}.
\begin{equation}
\deg({\rm rad}_\Delta^n (PQ))\leq
\deg({\rm rad}_\Delta^n (P))+({\rm rad}_\Delta^n (Q)).\label{4.001}
 \end{equation}
By the definition of the shifting prime, the equality in \eqref{5.0025} holds if $P$ and $Q$ are shifting prime.
\end{remark}


Let $m\in \N$ and let $f_i(z)$, $1\leq i\leq m$ be polynomials.
We next recall the definition of the Casoratian (Casorati determinant), which has played important roles in difference calculus, see e.g., \cite[Pages 354--357]{M-Thomson1933},~\cite[Pages 276--281]{Norlund1924}.
\begin{eqnarray}
{\mathcal C}(z)
={\mathcal C}(f_1,f_2,\dots,f_m)=
\left|
\begin{array}{cccc}
f_1(z) & f_2(z) & \cdots & f_m(z) \\
\Delta f_1(z) & \Delta f_2(z) & \cdots & \Delta f_m(z) \\[2ex]
  & \hdotsfor{2} & \\[2ex]
\Delta^{m-1}f_1(z) & \Delta^{m-1}f_2(z) & \cdots & \Delta^{m-1}f_m(z)
\end{array}
\right|.\label{6.001}
\end{eqnarray}
We need a number of known results in linear algebra e.g.,~\cite{Lang}.
By the properties of the determinant, we have
\begin{lemma}\label{Casoratian} The following claims hold.
\quad
\begin{enumerate}
\item[{\rm (i)}]\enspace It is possible to replace $\Delta^k f_i$ with $f_i(z+k)$, $i=1,2,\dots, m$, $k=1,2,\dots, m-1$ in ${\mathcal C}(z)$.
\item[{\rm (ii)}]\enspace ${\mathcal C}(z)$ does not vanish identically in case $f_i(z)$, $1\leq i\leq m$ are linearly independent over $\C$.
\item[{\rm (iii)}]\enspace It is possible to replace $f_i$ with $f_{m+1}=f_1+f_2+\cdots+ f_m$ for arbitrary fixed $i\in\{1,2,\dots, m\}$ in ${\mathcal C}(z)$.
\item[{\rm (iv)}]\enspace ${\mathcal C}(z)$ is divided by ${\rm gcd}(f_i,\Delta f_i, \dots, \Delta^{m-1} f_i))$, $i=1, 2, \dots, m$, some of which may be constants.
\end{enumerate}
\end{lemma}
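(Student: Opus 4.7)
The plan is to establish each of the four claims via suitable row/column operations on the determinant in \eqref{6.001}, combined with the two inversion formulas \eqref{4.0012} and \eqref{4.002} that relate $f_i(z+k)$ and $\Delta^k f_i(z)$. Items (i), (iii), (iv) will follow from elementary determinantal manipulations; the non-vanishing statement (ii) is the main obstacle and requires an auxiliary periodicity argument.

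For (i) I would argue by induction on the row index $k$. Formula \eqref{4.0012} gives $f_i(z+k)=\Delta^k f_i(z)+\sum_{j=0}^{k-1}\binom{k}{j}\Delta^j f_i(z)$, so the candidate entry $f_i(z+k)$ differs from $\Delta^k f_i(z)$ by a fixed $\C$-linear combination of the entries already sitting in rows $0,1,\dots,k-1$. Replacing the $k$-th row of $\mathcal{C}(z)$ by $(f_1(z+k),\dots,f_m(z+k))$ therefore amounts to subtracting that same combination of the preceding rows, an operation that leaves the determinant invariant. Claim (iii) is then immediate: after the rewrite in (i) the $i$-th column of $\mathcal{C}(z)$ is $(f_i(z),f_i(z+1),\dots,f_i(z+m-1))^T$, and the relation $f_{m+1}(z+k)=\sum_{\ell=1}^{m}f_\ell(z+k)$ shows that adding every other column to the $i$-th one converts it into the column of $f_{m+1}$. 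Claim (iv) follows from multilinearity: every entry $\Delta^k f_i(z)$ in the $i$-th column of $\mathcal{C}(z)$ is divisible by $h_i:={\rm gcd}(f_i,\Delta f_i,\dots,\Delta^{m-1}f_i)$, so $h_i$ can be factored out of that column and hence divides $\mathcal{C}(z)$.

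The substantive part is (ii). Using (i), I would rewrite $\mathcal{C}(z)=\det(f_j(z+i-1))_{1\leq i,j\leq m}$ and proceed by induction on $m$, the case $m=1$ being trivial. For the inductive step, if every $(m-1)\times(m-1)$ Casoratian taken from an $(m-1)$-subset of $\{f_1,\dots,f_m\}$ vanishes identically, the induction hypothesis already yields a $\C$-linear dependence. Otherwise, after reordering I may assume $\mathcal{C}(f_1,\dots,f_{m-1})\not\equiv 0$, and supposing for contradiction $\mathcal{C}(z)\equiv 0$, Cramer's rule supplies rational functions $c_1(z),\dots,c_{m-1}(z)$ with $f_m(z+k)=\sum_{j=1}^{m-1}c_j(z)f_j(z+k)$ for $k=0,1,\dots,m-1$. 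Shifting $z\mapsto z+1$ in the equations for $k=0,\dots,m-2$ and subtracting from the corresponding equations at $z$ yields $\sum_{j=1}^{m-1}(c_j(z+1)-c_j(z))f_j(z+k+1)=0$ for $k=0,\dots,m-2$; since the shifted $(m-1)$-Casoratian of $f_1,\dots,f_{m-1}$ is nonzero, each $c_j$ must be $1$-periodic. The decisive input is then the rigidity fact that a nonconstant rational function cannot be $1$-periodic, so the $c_j$ are constants, whence $f_m=\sum_{j=1}^{m-1}c_jf_j$ contradicts the $\C$-linear independence of $f_1,\dots,f_m$. The hardest part is precisely arranging this Cramer/periodicity argument cleanly so that the inductive reduction to a nonvanishing smaller Casoratian, and the collapse from rational coefficients to constants, are both transparent.
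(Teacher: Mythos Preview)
Your argument is correct. The paper itself does not prove this lemma: it simply declares ``We need a number of known results in linear algebra, e.g.,~\cite{Lang}. By the properties of the determinant, we have~\dots'' and states (i)--(iv) without justification, treating the Casoratian facts as standard. Your treatment of (i), (iii), (iv) via row/column operations is exactly the elementary determinantal reasoning the paper is alluding to.

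For (ii) you supply a complete self-contained argument that the paper outsources entirely to references. Your Cramer/periodicity induction is a clean version of the classical proof that linear independence implies nonvanishing of the Casoratian: the key step---that the coefficients $c_j$ extracted by Cramer's rule satisfy $c_j(z+1)=c_j(z)$ and hence, being rational, must be constant---is precisely the difference-equations analogue of the Wronskian argument, and the same periodicity-forces-constancy idea that the paper itself uses elsewhere (Lemma~\ref{summation} and the end of the proof of Theorem~\ref{abc.theorem}). One very small point: when you say ``the shifted $(m-1)$-Casoratian of $f_1,\dots,f_{m-1}$ is nonzero'', you are using that $\mathcal{C}(f_1,\dots,f_{m-1})(z+1)\not\equiv 0$, which of course follows from $\mathcal{C}(f_1,\dots,f_{m-1})(z)\not\equiv 0$; it may be worth saying that explicitly so the reader sees the system you obtain after subtraction is exactly governed by that shifted minor.
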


The equation \eqref{a1am} was considered in~\cite[Theorem~3.5]{IKLT} by the estimates of degree of Casoratian ${\mathcal C}(z)$ replacing $\Delta^k f_i$ with $f_i(z+k)$, $i=1,2,\dots, m$, $k=1,2,\dots, m-1$ in ${\mathcal C}(z)$, see Lemma~\ref{Casoratian}~(i). Following the idea in~\cite{IKLT}, we prove Theorem~\ref{fm.theorem}.
\vspace{0.25cm}

\noindent{\it Proof of Theorem~\ref{fm.theorem}}\enspace
We consider the Casoratian ${\mathcal C}(z)$ of $f_1(z), \dots , f_m(z)$ defined by \eqref{6.001}.
Since $f_1,\dots,f_{m}$ are linearly independent over $\C$, we have ${\mathcal C}(z)\not\equiv0$ by Lemma~\ref{Casoratian}~(ii).

 By \eqref{a1am} and Lemma~\ref{Casoratian}~(iii), we can replace $f_i$ with $f_{m+1}$ for arbitrary fixed $i\in\{1,2,\dots, m\}$ in the right hand of \eqref{6.001}.

Write $f_i(z)=A_i \prod_{j=1}^{N_i}(z-z_{i,j})^{\underline{n_{i,j}}}$, $f_i(z_{i,j}-1)\ne0$, $1\leq j\leq N_i$, $1\leq i\leq m+1$ following \eqref{5.001}, where $A_i$ are constants, $N_i$, $n_{i,j}\in \N$.
This gives that $f_i(z)$ is divided by ${\rm gcd}(f_i,\Delta f_i, \dots, \Delta^{m-1} f_i))=\prod_{j=1}^{N_i}(z-z_{i,j})^{\underline{[n_{i,j}-m+1]^+}}$ for any $i\in\{1,2,\dots, m+1\}$.
By means of Lemma~\ref{Casoratian}~(iv), we see that the Casoratian ${\mathcal C}(z)$ is divided by ${\rm gcd}(f_i,\Delta f_i, \dots, \Delta^{m-1} f_i))$, $1\leq i\leq m+1$.
%
%
Since we assume that $f_1,\ldots,f_{m+1}$ are pairwise relatively shifting prime, ${\mathcal C}(z)$ is divided by $q(z):=\prod_{i=1}^{m+1}\prod_{j=1}^{N_i}(z-z_{i,j})^{\underline{[n_{i,j}-m+1]^+}}$, namely there exists a polynomial $p(z)$ such that ${\mathcal C}(z)=p(z)q(z)$.

%
By the assumption $\min_{1\leq i\leq m} \deg f_i\geq m-1$, we have $\deg(\Delta^k f_i)=\deg f_i-k\geq 0$, $0\leq k\leq m-1$.
This implies that $\deg {\mathcal C}(z)$ is never beyond any sum of distinct $m$ of the $\deg f_i(z)$, $1\leq i\leq m+1$ minus $\sum_{k=0}^{m-1}k=m(m-1)/2$ as the sum of $\deg\bigl(\Delta^{k}f_{j_{\nu}}\bigr)$ for the mutually distinct $m$ integers $j_{\nu}\in\{1, \dots , m, m+1\}$.
Since $m\in \N$ and $f_1,\ldots,f_{m+1}$ are pairwise relatively shifting prime, by Lemma~\ref{deg2.lemma} and Remark~\ref{shifting radical2} we have
\begin{align*}
\min_{1\leq h\leq m+1}& \sum_{1\leq i\leq m+1, i\neq h} \deg f_i -\frac{1}{2}m(m-1)
\geq \deg\left(\prod_{i=1}^{m+1}{\rm gcd}(f_i,\Delta f_i, \dots, \Delta^{m-1} f_i))\right)\\
&=\sum_{i=1}^{m+1}\left(\deg (f_i)-\deg({\rm rad}_{\Delta}^{m-1}(f_j))\right)=\sum_{i=1}^{m+1}\deg (f_i)-\deg({\rm rad}_{\Delta}^{m-1}(f_1f_2\cdots f_{m+1})),
\end{align*}
which implies the first inequality of \eqref{diffMasonineq}.
Combining \eqref{small.eq} and the first inequality of \eqref{diffMasonineq}, we see that the second inequality of \eqref{diffMasonineq} follows. We have proved Theorem~\ref{fm.theorem}. \qquad $\square$

\begin{example}\label{exfor4.1}\enspace
Let $c\in \C\setminus\{0\}$, $k\in\C\setminus\{0,\pm5\}$. Set $\alpha=c+k/10-1/2$ and $\beta=c-k/10-1/2$.
We consider polynomials
$f_1(z)=(z-\alpha)^{\underline{5}}$, $f_2(z)=-(z-\beta)^{\underline{5}}$,
$f_3(z)=k(z-c)^{\underline{4}}$,
which are are linearly independent over $\C$, and consider a polynomial
$$
f_4(z)=-\frac{k(k-5)(k+5)}{50000}\left(1000z^2-(2000c+3000)z+k^2+1000c^2+3000c+1775\right).
$$
We see that $f_1$, $f_2$, $f_3$ and $f_4$ are pairwise relatively shifting prime for some suitable $c$ and $k$, and they satisfy \eqref{a1am} of the case $m=3$, i.e., $f_1+f_2+f_3=f_4$.
We have by definition, $\max_{1\leq i\leq 4}\{\deg f_i\}=5$, and $\deg{\rm rad}_\Delta^2 f_1=2$,
$\deg{\rm rad}_\Delta^2 f_2=2$, $\deg{\rm rad}_\Delta^2 f_3=2$, $\deg{\rm rad}_\Delta^2 f_4=2$ by \eqref{6.0017}. This gives that the first inequality of \eqref{diffMasonineq} is sharp in Theorem~\ref{fm.theorem}.
\end{example}

\begin{remark}\enspace Example~\ref{exfor4.1} shows that the inequality in Theorem~3.5 of {\rm\cite{IKLT}} is sharp for the case when $m=3$.
\end{remark}

\section{Polynomial solutions of difference Fermat functional equations}\label{psf}

In this section, we apply Theorem~\ref{abc.theorem} and Theorem~\ref{fm.theorem} to difference Fermat type functional equations for investigating nonexistence of polynomial solutions.
We adopt the falling expression $P^{\underline{n}}$ instead of $P^n$ for a polynomial $P$ in the Fermat type functional equations, which is the same to the case we adopt rising expressions $P^{\overline{n}}$ in some sense. For the Fermat type functional equations, see e.g., \cite{GH2004},~\cite{NS1979}.
Concerning the methods of the proofs, we follow the idea in~\cite{IKLT}.

\begin{theorem}\label{Equ3}
Let $a$, $b$ and $c$ be polynomials, not all constants, and $n\in\N$ such that $a^{\underline{n}}$, $b^{\underline{n}}$ and $c^{\underline{n}}$ are relatively shifting prime and satisfy
    \begin{equation}\label{abc.equ}
    a^{\underline{n}}+b^{\underline{n}}=c^{\underline{n}}.
    \end{equation}
Then $n\leq 2$. In addition, if one of $a$, $b$ and $c$ is a constant, then $n=1$.
\end{theorem}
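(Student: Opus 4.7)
The natural approach is to apply Theorem~\ref{abc.theorem} directly to the triple $a^{\underline{n}},b^{\underline{n}},c^{\underline{n}}$, which is relatively shifting prime by hypothesis and, because $a$, $b$, $c$ are not all constant, is itself not all constant. Setting $D:=\max\{\deg a,\deg b,\deg c\}\geq 1$ and using the identity $\deg(f^{\underline{n}})=n\deg f$, Theorem~\ref{abc.theorem} yields
$$
nD\leq \deg\bigl({\rm rad}_\Delta(a^{\underline{n}}b^{\underline{n}}c^{\underline{n}})\bigr)-1.
$$
The task then reduces to bounding the right-hand side in terms of $D$.

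The key step is the estimate
$$
\deg\bigl({\rm rad}_\Delta(f^{\underline{n}})\bigr)\leq \deg f
$$
for every polynomial $f$. To prove this I would factor $f(z)=A\prod_{i=1}^{\deg f}(z-\alpha_i)$, so that $f^{\underline{n}}(z)=A^n\prod_{i=1}^{\deg f}(z-\alpha_i)^{\underline{n}}$, which exhibits the zero multiset of $f^{\underline{n}}$ as a union of $\deg f$ blocks $\{\alpha_i,\alpha_i+1,\ldots,\alpha_i+n-1\}$. Invoking Proposition~\ref{radical degree} together with \eqref{5.0037}, by which $\deg({\rm rad}_\Delta(P))=\sum_w[\text{ord}_w(P)-\text{ord}_{w+1}(P)]^+$, each such block contributes at most $1$ to this sum: only at its rightmost point $w=\alpha_i+n-1$ can the block create a positive drop of $\text{ord}_w-\text{ord}_{w+1}$, while at its leftmost point it can only lower the drop elsewhere. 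Combining this bound with the subadditivity recorded in Remark~\ref{shifting radical} gives
$$
\deg\bigl({\rm rad}_\Delta(a^{\underline{n}}b^{\underline{n}}c^{\underline{n}})\bigr)\leq \deg a+\deg b+\deg c\leq 3D,
$$
hence $nD\leq 3D-1<3D$, and therefore $n\leq 2$.

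For the addendum, suppose without loss of generality that $a$ is constant (the cases where $b$ or $c$ is constant being analogous by symmetry of the Stothers--Mason bound). Then $a^{\underline{n}}$ is a nonzero constant, the relation $c^{\underline{n}}-b^{\underline{n}}=a^{\underline{n}}$ forces $\deg b=\deg c=:D$, and the hypothesis that not all of $a$, $b$, $c$ are constant forces $D\geq 1$. The same application of Theorem~\ref{abc.theorem} combined with the bounds above now gives $nD\leq 0+D+D-1=2D-1$, hence $n<2$ and therefore $n=1$. The only nontrivial technical point in the whole argument is the inequality $\deg({\rm rad}_\Delta(f^{\underline{n}}))\leq \deg f$ for the falling factorial expression; everything else is a direct combination of Theorem~\ref{abc.theorem} with elementary degree counting.
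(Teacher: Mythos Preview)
Your proof is correct and follows essentially the same route as the paper: apply Theorem~\ref{abc.theorem} to the triple $a^{\underline{n}},b^{\underline{n}},c^{\underline{n}}$, use Remark~\ref{shifting radical} for subadditivity, bound $\deg({\rm rad}_\Delta(f^{\underline{n}}))\leq \deg f$, and conclude by degree counting. The only differences are cosmetic---the paper sums three inequalities (one for each of $a,b,c$) rather than working with $D=\max$, and it uses the bound $\deg({\rm rad}_\Delta(f^{\underline{n}}))\leq \deg f$ without justification, whereas you supply a clean argument for it via Proposition~\ref{radical degree} and the block decomposition.
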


\begin{proof}
Let us first assume that $a$, $b$ and $c$ are all nonconstant polynomials.
By Theorem~\ref{abc.theorem} and Remark~\ref{shifting radical}, we have
    \begin{equation}\label{a.eq}
    \begin{split}
    n\deg(a)=\deg(a^{\underline{n}})&\leq\max\{ \deg(a^{\underline{n}}), \deg(b^{\underline{n}}),\deg(c^{\underline{n}})\}\leq \deg({\rm rad}_\Delta(a^{\underline{n}}~b^{\underline{n}}~c^{\underline{n}}))-1\\
    &\leq\deg({\rm rad}_\Delta(a^{\underline{n}}))+
    \deg({\rm rad}_\Delta(b^{\underline{n}}))+
    \deg({\rm rad}_\Delta(c^{\underline{n}}))-1\\
    &\leq \deg(a)+\deg(b)+\deg(c)-1.
    \end{split}
    \end{equation}
Similarly, for $b$ and $c$, we have
    \begin{equation}\label{b.eq}
    n\deg(b)\leq \deg(a)+\deg(b)+\deg(c)-1
    \end{equation}
and
    \begin{equation}\label{c.eq}
    n\deg(c)\leq \deg(a)+\deg(b)+\deg(c)-1.
    \end{equation}
Combining inequalities \eqref{a.eq}, \eqref{b.eq} and \eqref{c.eq}, we obtain
    $$
    n(\deg(a)+\deg(b)+\deg(c))\leq 3(\deg(a)+\deg(b)+\deg(c))-3,
    $$
which implies that $n\leq 2$.

We secondly assume one of $a$, $b$ and $c$ is a constant. Without loss of generality, we assume that $c$ is a constant. Then \eqref{a.eq} and \eqref{b.eq} yield
    $
    n(\deg(a)+\deg(b))\leq 2(\deg(a)+\deg(b))-2,
    $
which shows that $n\leq 1$. We proved our assertion.
\end{proof}

We have an example for the case $n=2$, see~\cite[Example~4.2]{IKLT}.
\begin{example}\label{deg2example}
We consider polynomials $a(z)=z^2$, $b(z)=-(i/2)\left(\sqrt{2}z^2 + 2z -\sqrt{2}\right)$ and $c(z)=-(1/2)\left(\sqrt{2}z^2 - 2z -\sqrt{2}\right)$. These polynomials satisfy \eqref{Equ3}.
The zeros of $b$ and $c$ are $\{-(1+\sqrt{3})/\sqrt{2},\ (-1+\sqrt{3})/\sqrt{2}\}$ and $\{(1-\sqrt{3})/\sqrt{2},\ (1+\sqrt{3})/\sqrt{2}\}$, respectively. Hence they are relatively shifting prime.
\end{example}

The next result extends Theorem \ref{Equ3} to equations
with arbitrarily many terms.

\begin{theorem}\label{Equn.eq}
Let $m\in \N$, $m\geq 2$, $n\in\N$. Suppose that there exist $f_1,\ldots,f_{m+1}$ nonconstant polynomials satisfying
\begin{equation}
    f_1^{\underline{n}}+f_2^{\underline{n}}+\cdots
    +f_m^{\underline{n}}=f_{m+1}^{\underline{n}}.\label{5.51}
\end{equation}
Further suppose  that $f_1^{\underline{n}}$, $\ldots$, $f_{m+1}^{\underline{n}}$ are pairwise relatively shifting prime and $f_1^{\underline{n}},\ldots,f_{m}^{\underline{n}}$ are linearly independent.
Then
\begin{equation}
n\leq m^2-1-\frac{m(m-1)}{2\max_{1\leq i\leq m+1}\{\deg(f_i)\}}.
\end{equation}
\end{theorem}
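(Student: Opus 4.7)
The plan is to apply Theorem~\ref{fm.theorem} to the collection of $m+1$ polynomials $g_i := f_i^{\underline{n}}$, $i = 1, \ldots, m+1$, and then convert the resulting bound on $\deg({\rm rad}_\Delta(g_1\cdots g_{m+1}))$ into a bound on $n$. First I would check the hypotheses of Theorem~\ref{fm.theorem}: the $g_i$ are pairwise relatively shifting prime by assumption, $g_1,\ldots,g_m$ are linearly independent over $\C$ by assumption, and \eqref{5.51} is exactly $g_1 + \cdots + g_m = g_{m+1}$. The minimum-degree assumption $\min_i \deg(g_i) = n\min_i \deg(f_i) \geq m-1$ holds in the principal case $n \geq m-1$; in the residual range $n \leq m-2$, writing $D := \max_i \deg(f_i) \geq 1$, the target bound $n \leq m^2-1-\frac{m(m-1)}{2D}$ reduces to $m-2 \leq m^2-1-\tfrac{m(m-1)}{2}$, an elementary inequality valid for all $m \geq 2$.

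The crux of the argument is the estimate
\begin{equation*}
\deg({\rm rad}_\Delta(f^{\underline{n}})) \leq \deg(f)
\end{equation*}
for any polynomial $f$, which extends the inequality used implicitly in the proof of Theorem~\ref{Equ3}. I would prove it by factoring $f(z) = A\prod_j(z-\alpha_j)^{m_j}$ over the distinct zeros $\alpha_j$, so that
\begin{equation*}
f^{\underline{n}}(z) = A^n\prod_j\bigl((z-\alpha_j)^{\underline{n}}\bigr)^{m_j}.
\end{equation*}
Each $(z-\alpha_j)^{\underline{n}}$ is a single falling factorial having $\alpha_j$ as a shifting zero of height $n$, so $\deg({\rm rad}_\Delta((z-\alpha_j)^{\underline{n}}))=1$. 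Iterating the subadditivity estimate of Remark~\ref{shifting radical}, one obtains $\deg({\rm rad}_\Delta(f^{\underline{n}})) \leq \sum_j m_j = \deg(f)$.

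Combining this bound with the pairwise shifting prime hypothesis and Remark~\ref{shifting radical} produces
\begin{equation*}
\deg({\rm rad}_\Delta(f_1^{\underline{n}}\cdots f_{m+1}^{\underline{n}})) \leq \sum_{i=1}^{m+1}\deg({\rm rad}_\Delta(f_i^{\underline{n}})) \leq \sum_{i=1}^{m+1}\deg(f_i) \leq (m+1)D.
\end{equation*}
Since $\max_i \deg(g_i) = nD$, the second inequality of \eqref{diffMasonineq} in Theorem~\ref{fm.theorem} becomes $nD \leq (m-1)(m+1)D - \tfrac{1}{2}m(m-1) = (m^2-1)D - \tfrac{1}{2}m(m-1)$. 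Dividing by $D>0$ (positive because every $f_i$ is nonconstant) yields the asserted bound $n \leq m^2-1-\frac{m(m-1)}{2D}$. The main obstacle I foresee is the bookkeeping around the minimum-degree hypothesis of Theorem~\ref{fm.theorem} when $n$ is small; once it is resolved case-by-case as above, the remaining argument mirrors the proof of Theorem~\ref{Equ3} but replaces the $m=2$ input (Theorem~\ref{abc.theorem}) with its higher-order counterpart.
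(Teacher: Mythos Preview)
Your proof is correct and follows essentially the same route as the paper: apply Theorem~\ref{fm.theorem} to $g_i=f_i^{\underline{n}}$, bound $\deg({\rm rad}_\Delta(f_i^{\underline{n}}))\leq\deg(f_i)$, sum to get $(m+1)D$, and divide by $D$. You are in fact more careful than the paper, which silently passes over the hypothesis $\min_i\deg(g_i)\geq m-1$ of Theorem~\ref{fm.theorem}; your case split ($n\geq m-1$ versus $n\leq m-2$) cleanly disposes of this point.
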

\begin{proof}
By using Theorem \ref{fm.theorem}, we have
\begin{align*}
 n\max_{1\leq i\leq m+1}\{\deg(f_i)\}
& \leq(m-1)\deg({\rm rad}_\Delta(f_1^{\underline{n}}f_2^{\underline{n}}\cdots f_{m+1}^{\underline{n}})-\frac{1}{2}m(m-1)\\
& \leq(m+1)(m-1)\max_{1\leq i\leq m+1}\{\deg(f_i)\}-\frac{1}{2}m(m-1).
\end{align*}
%
Since $\max_{1\leq i\leq m+1}\{\deg(f_i)\}\geq 1$, we obtain our assertion.
\end{proof}

\begin{remark}\enspace We change the assumption that $f_{m+1}(z)$ is a nonconstant polynomial in Theorem~\ref{Equn.eq} into $f_{m+1}(z)\equiv1$, and keep other conditions. We use the similar arguments in the proof of Theorem~\ref{Equn.eq} with $\deg (f_{m+1})=0$. Then we obtain an estimate for polynomial solutions of $ f_1^{\underline{n}}+f_2^{\underline{n}}+\cdots +f_m^{\underline{n}}=1$,
\begin{equation*}
 n\max_{1\leq i\leq m}\{\deg(f_i)\}\leq
    m(m-1)\max_{1\leq i\leq m}\{\deg(f_i)\}-\frac{1}{2}m(m-1),
\end{equation*}
and hence $n\leq m(m-1)\left(1-1/(2\max_{1\leq i\leq m}\{\deg(f_i)\})\right)$. This implies that $n\leq m^2-m-1$, which corresponds to the result on polynomial solutions of $f_1^{n}+f_2^{n}+\cdots+f_m^{n}=1$, see~e.g.,~{\rm\cite{GH2004},~\cite{PEMS2020}}. When $m=3$, we have $n\leq5$. There exist examples for the case $n=2$ and $n=3$ below.
\end{remark}

\begin{example}\enspace Consider relatively shifting prime polynomials $f_1(z)=(1/\sqrt{2})z+1$,
$f_2(z)=(1/2)z+(1/2)(\sqrt{2}-\sqrt{6})$ and $f_3(z)=(\sqrt{3}i/2)z+(i/2)(\sqrt{6}-\sqrt{2})$. By computations, we see that $f_1$, $f_2$ and $f_3$ satisfy
\begin{equation}
f_1^{\underline{2}}+f_2^{\underline{2}}+f_3^{\underline{2}}=1.\label{5.52}
\end{equation}
We consider another triad of relatively shifting prime polynomials $f_1(z)=(1/24\sqrt{2})(24z^2+48z-29)$, $f_2(z)=(1/48)(24z^2-48z-61)$ and $f_3(z)=(i/16\sqrt{3})(24z^2+16z+3)$.
These polynomials satisfy \eqref{5.52}.
\end{example}

\begin{example}\enspace Let $s\in\C$ be a root of $s^9-144s^3+108=0$ and $t\in\C\setminus\{0\}$.
We consider polynomials
$f_1(z)=z^3-a_2 z^2-a_1 z+a_0$, $f_2(z)=-z^3+a_2 z^2+a_1 z-(3 a_0+s^3)/3$
 and $f_3(z)=sz^2+tz+(t^2-4s^2)/4s$,
where
$$
a_2=-\frac{3t}{2s},\ a_1=\frac{3(4s^2-t^2)}{4s^2},\ a_0=\frac{3t^3-36s^2t-4s^6}{24s^3}
$$
By computations, we see that $f_1$, $f_2$ and $f_3$ satisfy
\begin{equation*}
f_1^{\underline{3}}+f_2^{\underline{3}}+f_3^{\underline{3}}=1,\label{5.53}
\end{equation*}
for an arbitrary $t$. We thus obtain $f_1$, $f_2$ and $f_3$ that are relatively shifting prime polynomials for a suitable $t$.
\end{example}

\bigskip

\medskip
\noindent
\emph{Katsuya Ishizaki}\\
\textsc{The Open University of Japan, 2-11 Wakaba,\\
Mihama-ku, Chiba, 261- 8586 Japan}\\
\texttt{email:ishizaki@ouj.ac.jp}

\medskip
\noindent
\emph{Z.-T.~Wen}\\
\textsc{Shantou University, Department of Mathematics,\\
Daxue Road No.~243, Shantou 515063, China}\\
\texttt{e-mail:zhtwen@stu.edu.cn}

\vspace{1cm}
\end{document}